\documentclass[12pt]{amsart}
\usepackage{amssymb}
\usepackage{amsmath}
\usepackage{mathtools}
\usepackage{amsfonts}
\usepackage{mathrsfs}
\usepackage{psfrag}
\usepackage{verbatim}
\usepackage{color}
\usepackage{enumitem}
\usepackage{bm}
\usepackage[usenames,dvipsnames]{xcolor}
\usepackage{subcaption}
\usepackage{graphicx}
\usepackage{tikz}
\usepackage{color}
\usepackage{pgfplots}
\usepackage{tikz}
\usetikzlibrary{calc}

\numberwithin{equation}{section}

\usepackage[margin=1.2in]{geometry}
\newcommand{\ind}{\boldsymbol{1}}

\usepackage[colorlinks=true]{hyperref}
\hypersetup{urlcolor=blue, citecolor=MidnightBlue}

\DeclareGraphicsExtensions{}
\theoremstyle{plain}
\newtheorem{thm}{Theorem}[section]

\newtheorem{lem}[thm]{Lemma}
\newtheorem{prop}[thm]{Proposition}

\newtheorem{defn}[thm]{Definition}

\newtheorem{thma}{Theorem}

\theoremstyle{remark}
\newtheorem{remark}[thm]{Remark}

\newcommand{\MMM}{\mathcal{M}}

\newcommand{\Sn}{\mathsf{S}_n}
\newcommand{\RR}{\mathbb{R}}

\newcommand{\NN}{\mathbb{N}}
\newcommand{\EE}{\mathbf{E}}
\newcommand{\CCC}{\mathcal{C}}
\newcommand{\JJJ}{\mathcal{J}}
\newcommand{\full}[1]{\widetilde{#1}}
\newcommand{\jp}[1]{\hat{#1}}
\newcommand{\jj}[1]{{#1}}
\newcommand{\jm}[1]{\check{#1}}

\newcommand{\OmH}{\Omega^{\mathsf{H}}}
\newcommand{\dH}{d^{\mathsf{H}}}
\newcommand{\DTV}{d_{\mathsf{TV}}}
\newcommand{\htop}{h_{\mathsf{top}}}
\DeclareMathOperator{\Lip}{Lip}

\begin{document}
\title[Effective uniqueness in the $d$-bar metric]{Effective uniqueness of the measure of maximal entropy in Ornstein's $\bar{d}$-metric}

\author{Vaughn Climenhaga}
\author{Teena Kumari}

\address{Department of Mathematics and Statistics \\ UNC Charlotte, Charlotte, NC, 28223}
\email{vclimenh@charlotte.edu}

\address{Department of Mathematics \\ University of Houston, Houston, TX, 77004}
\email{klnu@cougarnet.uh.edu}

\thanks{This material is based upon work supported by the National Science Foundation under Award No.\ DMS-2154378 and DMS-2453314.}
\subjclass{Primary: 37A50, 37D35. Secondary: 37B10, 37D20.}
\keywords{Measure of maximal entropy, effective uniqueness, $\bar{d}$-metric}

\date{\today}

\maketitle

\begin{abstract}
Every topologically mixing shift of finite type has a unique measure of maximal entropy. It follows from Ornstein theory that every invariant measure with entropy close to maximal must be close to the MME in the $\bar{d}$-metric. We prove a quantitative version of this, strengthening earlier results of Kadyrov that established effective uniqueness in the Wasserstein metric.
\end{abstract}

\section{Introduction}\label{section : intro}

\subsection{Main results}

Let $\Sigma$ be a mixing shift of finite type (SFT) on a finite alphabet $S$, and let $m$ be its unique measure of maximal entropy (MME), so that $h(m) = \htop(\Sigma)$. By upper semi-continuity of the entropy map $\mu \mapsto h(\mu)$, the following is true for every sequence of shift-invariant Borel probability measures $\mu_n$ on $\Sigma$:
\begin{equation}\label{eqn: continuity of entropy map}
\text{if $h(\mu_n) \to \htop(\Sigma)$, then $\mu_n \to m$ in the weak*-topology.}
\end{equation}
Writing $\mu(\phi) := \int\phi\,d\mu$, the weak*-topology is metrized by the Wasserstein metric
\begin{equation}\label{eqn : W_1}
W_{1}(\mu, \nu)
:= 
\sup \{
|\mu(\phi) - \nu(\phi)|
: \phi \in \Lip^1(\Sigma) \},
\end{equation}
where $\Lip^1(\Sigma)$ denotes the set of $1$-Lipschitz functions on $\Sigma$.
In \cite{kadyrov2015effective}, Kadyrov proved an effective version of \eqref{eqn: continuity of entropy map}, which shows that there exists $C = C(\Sigma)>0$ such that for any invariant measure $\mu$ on $\Sigma$, we have
\begin{equation}\label{eqn: EKP by Kadyrov}
W_{1}(\mu, m) \leq C \sqrt{ \htop(\Sigma) - h(\mu)}.
\end{equation}
We prove the following strengthening of \eqref{eqn: EKP by Kadyrov} in terms of Ornstein's $\bar{d}$-metric on the space of invariant measures, whose definition we recall in \eqref{eqn:d-bar} below.

\begin{thma}\label{thm: Main Theorem}
Let $\Sigma$ be a mixing SFT, and let $m$ be its unique MME. Then there exists a constant $C>0$ (see \eqref{eqn:C}) such that for any shift-invariant measure $\mu$ on $\Sigma$, we have 
\begin{equation}\label{eqn : d_bar inequality in main theorem}
\bar{d}(\mu, m)\leq C \sqrt{ \htop(\Sigma) - h(\mu) }.
\end{equation}
\end{thma}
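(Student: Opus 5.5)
We will reduce the statement to a transportation-cost inequality for the MME viewed as a Markov measure, together with Marton's coupling argument. Recall that $m$ is the Parry measure: a stationary Markov chain on $S$ with transition matrix $P_{ab} = A_{ab} v_b/(\lambda v_a)$, where $A$ is the transition matrix of $\Sigma$, $\lambda = e^{\htop(\Sigma)}$, and $v$ is the right Perron eigenvector. The key identity is that for every cylinder $[w_1\cdots w_n]$ in $\Sigma$ we have $m[w] = u_{w_1}v_{w_n}\lambda^{-(n-1)}$, up to normalization. Hence for any invariant $\mu$, the relative entropy of the $n$-block marginals satisfies
\[
D(\mu_n \,\|\, m_n) = \sum_w \mu[w]\log\frac{\mu[w]}{m[w]} = -H_\mu(\xi_n) + (n-1)\log\lambda + O(1),
\]
with $O(1)$ uniform in $\mu$ and $n$. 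Dividing by $n$ and letting $n\to\infty$ gives
\[
\frac1n D(\mu_n\|m_n) \to \htop(\Sigma) - h(\mu).
\]

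Next we use Marton's transportation inequality for contracting Markov chains. For a Markov measure $m$ whose kernel has Dobrushin/Marton contraction coefficient $<1$ (after passing to blocks if necessary), there is a constant $C_0$ with
\[
\bar d_n(\nu, m_n) \le C_0\sqrt{\tfrac{1}{n} D(\nu\|m_n)}
\]
for every probability $\nu$ on $S^n$. Here $\bar d_n$ is the normalized Hamming transport distance. Mixing of the SFT means $A^N>0$ for some $N$, so $P^N$ has all entries positive and its Dobrushin coefficient $\delta<1$.

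To prove Marton's inequality we follow her argument. Couple coordinates sequentially: given the past, pair the conditional law $\nu(x_k\mid x_1^{k-1})$ with $P(y_{k-1},\cdot)$ via an optimal total-variation coupling. Pinsker's inequality bounds each conditional TV cost by $\sqrt{D_k/2}$. The discrepancy accumulated when $x_{k-1}\ne y_{k-1}$ is controlled through the contraction of $P$, which yields a geometric sum bounded by $(1-\delta)^{-1}$ times $N$ factors. Cauchy–Schwarz then turns $\sum_k\sqrt{D_k}$ into $\sqrt{n\sum_k D_k}$, and the chain rule gives $\sum_k D_k = D(\nu\|m_n)$.

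Combining the two steps with $\nu=\mu_n$ and using $\bar d(\mu,m)=\lim_n \bar d_n(\mu_n,m_n)$ (the standard characterization of $\bar d$ for invariant measures) gives
\[
\bar d(\mu,m)\le C_0\sqrt{\htop(\Sigma)-h(\mu)},
\]
which is the statement with $C=C_0$.

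The main obstacle is the Marton step when $P$ itself is not contracting, since its rows may have disjoint supports. In that case a naive one-step coupling fails to merge after a mismatch. We would handle this by waiting $N$ steps. We couple so that after a mismatch the $y$-chain is steered to re-merge within $N$ steps with probability at least $1-\delta$, paying at most $N$ errors per attempt. Alternatively, one can recode $\Sigma$ into $N$-blocks, apply the contractive version there, and absorb the factor $N$ into $C$. Care is needed to ensure the coupled $x$-marginal stays exactly $\mu_n$ and the $y$-marginal exactly $m_n$, so that the geometric bound holds uniformly in $n$.
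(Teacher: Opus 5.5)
Your overall plan is the paper's: the divergence-rate identity (your Parry-measure computation plays the role of Lemma~\ref{lem: KL-divergence in terms of entropy}), then a Marton bound $\bar d_n\le C_0\sqrt{\tfrac1n D(\cdot\,\|\,m_n)}$, then $\bar d=\lim_n \bar d_n$. Your one-symbol sequential coupling is correct only when the one-step coefficient $\max_{a,b}\DTV(P(a,\cdot),P(b,\cdot))$ is less than $1$. For a mixing SFT it can equal $1$: with transitions $1\to2$, $2\to3$, $3\to1$, $3\to2$, the rows $P(1,\cdot)$ and $P(2,\cdot)$ are point masses at $2$ and $3$. Positivity of $P^N$ never enters your recursion, so the case you flag as the main obstacle is where the proof actually lives, and neither fix closes it.

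The recoding fix fails outright. The law of the next $N$-block given the current block depends only on the current block's last symbol $a$. That next block is a $P$-path whose first symbol has law $P(a,\cdot)$. So two rows of the block chain are at total-variation distance exactly $\DTV(P(a,\cdot),P(b,\cdot))$, the block chain has the same Dobrushin coefficient as $P$, and the contractive version does not apply. The ``wait $N$ steps'' fix states the goal rather than a construction. The $m$-chain cannot be steered to merge with the $\mu$-process directly, since that process is not Markov, and keeping both marginals exact while doing so is precisely the difficulty.

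The missing idea is an intermediary Markov block, which is the paper's auxiliary measure. Use $k$-blocks with $(T^k)_{ab}\ge q$, and fix block $j$ and the past.
\begin{enumerate}
\item Let $w$ be an $m$-block started from the current last symbol of the $\mu$-process.
\item Couple the conditional $\mu$-block $v$ to $w$ optimally in total variation, at cost $\varepsilon_j:=\DTV(\mu_k^{y,jk},m_k^{y,jk})$, which Pinsker controls.
\item Couple the true $m$-block $u$ to $w$ by the identity when their starting symbols agree. Otherwise, couple the endpoints maximally and fill in bridges, so that $u_k=w_k$ with probability at least $q$.
\item Glue these two couplings along $w$ as in Lemma~\ref{lem:Zjxy}. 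The $u$- and $v$-marginals are then exactly the conditional laws of $m$ and $\mu$, so the block products couple $m_{nk}$ and $\mu_{nk}$.
\end{enumerate}
We have $\dH_k(u,v)\le \ind_{v\ne w}+\dH_k(u,w)$ and $\ind_{u_k\ne v_k}\le \ind_{v\ne w}+\ind_{u_k\ne w_k}$. Let $p_j$ be the probability that the two processes disagree in the symbol preceding block $j$ (with $p_0=0$). Then the expected cost of block $j$ is at most $\EE\,\varepsilon_j+p_j$, and $p_{j+1}\le \EE\,\varepsilon_j+(1-q)p_j$. Hence $\sum_j p_j\le q^{-1}\sum_j\EE\,\varepsilon_j$. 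This is the recursion that legitimately produces your geometric factor; the paper runs the analogous recursion on expected Hamming costs in Lemma~\ref{lem:Eleq}. Your Pinsker, Cauchy--Schwarz and chain-rule step, applied to the block conditionals, then finishes the proof at the cost of a factor $\sqrt{k}$ in $C$.
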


\begin{remark}
Ornstein introduced the $\bar{d}$-metric as part of his proof that Bernoulli shifts with the same entropy are isomorphic \cite{dO70a,dO74}.
Convergence in $\bar{d}$ is stronger than weak*-convergence: indeed,
\begin{equation}\label{eqn:h-cts}
\text{if $\bar{d}(\mu_n,\mu)\to 0$, then
$\mu_n\xrightarrow{\text{weak*}} \mu$ and $h(\mu_n)\to h(\mu)$.}
\end{equation}
If the measure $\mu$ has the property that the converse of \eqref{eqn:h-cts} holds, then $\mu$ is said to be \emph{finitely determined}. 
The finitely determined measures are exactly those which are isomorphic to Bernoulli shifts \cite{dO70,OW74}; it is well-known that the unique MME of every mixing SFT has this \emph{Bernoulli property}, and is thus finitely determined.  Theorem \ref{thm: Main Theorem} can be viewed as a strengthening of this fact, in the same sense that Kadyrov's result \eqref{eqn: EKP by Kadyrov} strengthens \eqref{eqn: continuity of entropy map}. By the same token, one can view Theorem \ref{thm: Main Theorem} as providing an alternative proof that $m$ has the Bernoulli property.
\end{remark}


Our proof of Theorem \ref{thm: Main Theorem} relies heavily on a result of Marton 
\cite{marton1996bounding} that controls the $\bar{d}$-distance between the finite-dimensional marginals of $\mu$ and $m$ in terms of their informational (Kullback--Liebler) divergence. This extends Pinsker's inequality (see Lemma \ref{lem:Pinsker}), which is also central to Kadyrov's argument.

A different proof of \eqref{eqn: EKP by Kadyrov} was given by R\"uhr and Sarig \cite{ruhr2022effective}, who extended the result to a class of equilibrium states for countable-state Markov shifts (assuming a strong positive recurrence condition). Their argument relies on the fact that the variational principle provides a Legendre transform relationship between the topological pressure function $\phi \mapsto P(\phi)$ and the measure-theoretic entropy $\mu \mapsto h(\mu)$, along with a bound on the second derivative of pressure given by combining the Green--Kubo formula with (uniformly) exponential decay of correlations: in particular, they prove existence of a constant $M>0$ such that given any $\phi \in \Lip^1(\Sigma)$, we have
$\frac {d^2}{dt^2}P(t\phi) \leq M$, and consequently,
\begin{equation}\label{eqn:P-leq}
P(t\phi) \leq \htop(\Sigma) + m(\phi) t + M t^2
\quad\text{for all } t\in \mathbb{R}.
\end{equation}
Since our main result \eqref{eqn : d_bar inequality in main theorem} strengthens \eqref{eqn: EKP by Kadyrov}, it is natural to ask whether there is a corresponding strengthening of \eqref{eqn:P-leq} that extends the inequality to a broader class of $\phi$. We provide such a result in Theorem \ref{thm: pressure estimates} below. The key is a description of $\bar{d}$ analogous to \eqref{eqn : W_1}. Before giving this description, we recall a common definition of the Wasserstein and $\bar{d}$-metrics. 

\begin{defn}\label{def:coupling-joining}
A \emph{coupling} of probability measures $\mu$ and $\nu$ on $\Sigma$ is a probability measure $\theta$ on $\Sigma\times \Sigma$ such that $\pi_*^1 \theta = \mu$ and $\pi_*^2 \theta = \nu$, where $\pi^j \colon \Sigma^2 \to \Sigma$ denotes the projection onto the $j^{\mathrm{th}}$ coordinate. When $\mu$ and $\nu$ are shift-invariant, a \emph{joining} of $\mu$ and $\nu$ is a coupling that is itself shift-invariant. We write $\CCC(\mu,\nu)$ and $\JJJ(\mu,\nu)$ for the collections of all couplings and all joinings, respectively. 
Writing $\rho$ for the metric on $\Sigma$, the
Wasserstein and $\bar{d}$-distances between $\mu$ and $\nu$ are defined by
\begin{align}
\label{eqn:W1}
W_1(\mu,\nu) &:= \inf_{\theta \in \CCC(\mu,\nu)} \int_{\Sigma^2} \rho(x,y) \,d\theta(x,y), \\
\label{eqn:d-bar}
\bar{d}(\mu,\nu) &:= \inf_{\theta\in \JJJ(\mu,\nu)}
\int_{\Sigma^2} \ind_{[x_{0}\neq y_{0}]}(x,y)\, d\theta(x,y).
\end{align}
\end{defn}

The equivalence of the definitions \eqref{eqn : W_1} and \eqref{eqn:W1} for the Wasserstein metric is provided by \emph{Kantorovich--Rubinstein duality} \cite[Theorem 5.10 and Remark 6.5]{Villani2008OptimalTO}. 
We provide an analogous duality result for Ornstein's $\bar{d}$-metric.
Let
\begin{equation}\label{eqn:dnH}
\dH_n(x,y):= \frac{1}{n} \# \left\{ k\in \{ 0,1,2,\dots, n-1\}:\ x_{k}\neq y_{k}  \right\}
\end{equation}
denote the normalized Hamming metric on $S^n$, which gives a pseudo-metric on $\Sigma$. Let $\OmH_n$ denote the set of functions $\phi \colon \Sigma\to \mathbb{R}$ that satisfy the Hamming--Lipschitz bound
$|\phi(x)-\phi(y)|\leq \dH_n(x,y)$ for all $x,y \in \Sigma$, and let $\OmH := \bigcup_{n\in \mathbb{N}} \OmH_n$.

\begin{thma}\label{thm: dual form of d_bar}
Given any two invariant probability measures $\mu$ and $\nu$ on $\Sigma$, we have
\begin{equation}\label{eqn:d-bar-sup}
\bar{d}(\mu, \nu)= \sup\left\{ \left| \mu(\phi)- \nu(\phi) \right| : \phi\in \OmH \right\}.
\end{equation}
\end{thma}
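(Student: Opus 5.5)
The plan is to prove the two inequalities separately: the easy one by integrating against a joining, and the hard one by reducing to finite-dimensional marginals and applying Kantorovich--Rubinstein duality there.

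\emph{Easy direction ($\geq$).} Fix $\phi\in\OmH_n$ and a joining $\theta\in\JJJ(\mu,\nu)$. Then
\[
|\mu(\phi)-\nu(\phi)| \le \int |\phi(x)-\phi(y)|\,d\theta \le \int \dH_n(x,y)\,d\theta = \frac1n\sum_{k=0}^{n-1}\theta[x_k\neq y_k] = \theta[x_0\ne y_0].
\]
The last equality uses shift-invariance of $\theta$. Taking the infimum over $\theta$ gives $\sup \le \bar d$.

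\emph{Hard direction ($\leq$).} First I will use the standard fact that
\[
\bar d(\mu,\nu)=\lim_{n}\bar d_n(\mu_n,\nu_n),
\]
where $\mu_n,\nu_n$ are the marginals on $S^n$ and $\bar d_n$ is the optimal transport cost for the metric $\dH_n$ over all couplings of $\mu_n,\nu_n$. I expect this identification to be the main obstacle, so I would prove it directly rather than cite it.

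\begin{itemize}
\item The inequality $\bar d_n\le\bar d$ is immediate, since any joining projects to a coupling of $(\mu_n,\nu_n)$ whose $\dH_n$-cost equals $\theta[x_0\ne y_0]$ by invariance.
\item For the reverse inequality, take an optimal coupling $\lambda_n$ of $(\mu_n,\nu_n)$. Extend it to a (non-invariant) coupling $\tilde\lambda_n$ of $\mu,\nu$ by gluing conditional measures; any coupling on $\Sigma^2$ with the right marginals on all coordinates will do, for instance by concatenating independent copies of $\lambda_n$ on consecutive blocks of length $n$. Then average along the orbit: $\theta_N=\frac1N\sum_{j<N}\sigma^j_*\tilde\lambda_n$.
\item Because $\mu,\nu$ are invariant, the marginals of $\theta_N$ remain $\mu$ and $\nu$ exactly, and any weak$^*$ limit point $\theta$ is a joining.
\item Its cost satisfies $\theta[x_0\ne y_0]=\lim \frac1N\sum_j \tilde\lambda_n[x_j\neq y_j]$. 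Each full block contributes $n\,\bar d_n$, so the limit is at most $\bar d_n(\mu_n,\nu_n)$. Since $[x_0\ne y_0]$ is clopen, the weak$^*$ limit passes through.
\end{itemize}
One caveat: concatenating independent blocks does not produce marginals equal to $\mu$. I would fix this by building $\tilde\lambda_n$ as a coupling of $\mu$ and $\nu$ directly via disintegration: couple the $[0,n)$ coordinates by $\lambda_n$, then extend by the conditional measures of $\mu$ given $x_{[0,n)}$ and of $\nu$ given $y_{[0,n)}$, coupled independently. Its cost on $[0,n)$ is then $\bar d_n$, but other coordinates are uncontrolled. To handle this I would instead use $\bar d_{kn}$ with $k\to\infty$ and averaging, or use the known subadditivity-type fact that $\bar d_n\to\bar d$.

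\emph{Finite-dimensional duality and conclusion.} On the finite metric space $(S^n,\dH_n)$, Kantorovich--Rubinstein duality \cite{Villani2008OptimalTO} gives
\[
\bar d_n(\mu_n,\nu_n)=\sup\{\mu_n(f)-\nu_n(f): f \text{ is } 1\text{-Lipschitz for } \dH_n\}.
\]
Each such $f$ lifts to $\phi(x)=f(x_0\dots x_{n-1})$, which lies in $\OmH_n$ and satisfies $\mu(\phi)-\nu(\phi)=\mu_n(f)-\nu_n(f)$. Hence
\[
\bar d(\mu,\nu)=\lim_n\bar d_n\le\sup_{\phi\in\OmH}|\mu(\phi)-\nu(\phi)|,
\]
which is the required bound.
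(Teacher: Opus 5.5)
Your proof is complete once you cite $\bar{d}(\mu,\nu)=\lim_n \bar{d}_n(\mu_n,\nu_n)$, and then it is essentially the paper's route. The paper quotes exactly this identity, \eqref{eqn:dbar-lim-sup}, from Glasner. It then combines it with Kantorovich--Rubinstein duality on $(S^n,\dH_n)$ and the lift of $\dH_n$-Lipschitz functions on $S^n$ to $\OmH_n$, as you do. Your joining argument for $\sup_{\phi\in\OmH}|\mu(\phi)-\nu(\phi)|\le\bar{d}(\mu,\nu)$ is a correct, citation-free replacement for that half. It also shows that the only outside input needed is $\bar{d}\le\liminf_n\bar{d}_n$.

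The self-contained proof of that inequality which you promise does not go through as written, and the fix is different from the ones you suggest. The disintegration repair leaves the cost on coordinates outside $[0,n)$ uncontrolled, so the orbit average is not bounded by $\bar{d}_n$. Passing to $\bar{d}_{kn}$ does not address the marginal problem either. A superadditivity argument for $n\bar{d}_n(\mu_n,\nu_n)$ only shows that $\lim_n\bar{d}_n$ exists and equals $\sup_n\bar{d}_n$; it does not identify the limit with $\bar{d}$.

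The missing idea is that you do not need exact marginals at finite $n$. Keep your first construction and add a second limit in $n$:
\begin{itemize}
\item Let $\Lambda_n$ be the measure on $S^{\NN}\times S^{\NN}$ placing independent copies of an optimal $\lambda_n\in\CCC(\mu_n,\nu_n)$ on the blocks $[jn,(j+1)n)$. Set $\theta^{(n)}:=\frac1n\sum_{i=0}^{n-1}(\sigma\times\sigma)^i_*\Lambda_n$.
\item Since $(\sigma\times\sigma)^n_*\Lambda_n=\Lambda_n$, the measure $\theta^{(n)}$ is shift-invariant. Its cost is exactly $\theta^{(n)}[x_0\neq y_0]=\EE[\dH_n,\lambda_n]=\bar{d}_n(\mu_n,\nu_n)$.
\item By invariance of $\mu$, the first marginal $\mu^{(n)}$ of $\theta^{(n)}$ satisfies $|\mu^{(n)}[w]-\mu[w]|\le \ell/n$ for every word $w$ of length $\ell$. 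Only the shifts $i$ whose window $[i,i+\ell)$ straddles a block boundary contribute error, and likewise for $\nu$.
\item Pass to a subsequence along which $\bar{d}_n(\mu_n,\nu_n)\to\liminf_n\bar{d}_n(\mu_n,\nu_n)$ and $\theta^{(n)}$ converges weak* to some $\theta$. Then $\theta$ is shift-invariant with marginals $\mu$ and $\nu$, hence a joining. It is supported on $\Sigma\times\Sigma$ even though the $\theta^{(n)}$ need not be.
\item Since $[x_0\neq y_0]$ is clopen, $\theta[x_0\neq y_0]=\liminf_n\bar{d}_n(\mu_n,\nu_n)$. Hence $\bar{d}(\mu,\nu)\le\liminf_n\bar{d}_n(\mu_n,\nu_n)$.
\end{itemize}
Together with your bound $\bar{d}_n\le\bar{d}$, this gives the identity your final step uses.
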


Theorem \ref{thm: dual form of d_bar} is a consequence of a standard description of the $\bar{d}$-metric in terms of the corresponding metric on sets of finite words, together with Kantorovich--Rubinstein duality in that setting; see \S\ref{sec:Ham-Was}.
Using Theorems \ref{thm: Main Theorem} and \ref{thm: dual form of d_bar}, we prove that:

\begin{thma}\label{thm: pressure estimates}
Let $\Sigma$ be a mixing SFT and $m$ its unique MME.
Then there exists a constant $M>0$ such that for all $\phi \in \OmH$, the topological pressure satisfies \eqref{eqn:P-leq}.
\end{thma}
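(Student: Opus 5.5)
Let $h := \htop(\Sigma)$. We will get Theorem \ref{thm: pressure estimates} by combining the variational principle with Theorems \ref{thm: Main Theorem} and \ref{thm: dual form of d_bar}, in the Legendre-transform spirit of R\"uhr and Sarig.

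\textbf{Step 1 (variational principle).} Fix $\phi\in\OmH$. Since $\phi$ is continuous, the variational principle gives
\[
P(t\phi)=\sup_\mu \bigl(h(\mu)+t\mu(\phi)\bigr).
\]
So it suffices to show that for every invariant $\mu$ and every $t\in\RR$,
\[
h(\mu)+t\mu(\phi)\le h + t\,m(\phi)+Mt^2 .
\]

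\textbf{Step 2 (comparing $\mu$ with $m$).} Write $\delta := h-h(\mu)\ge 0$. Since $\phi\in\OmH$, Theorem \ref{thm: dual form of d_bar} gives
\[
|\mu(\phi)-m(\phi)|\le \bar d(\mu,m).
\]
Theorem \ref{thm: Main Theorem} then bounds this by $C\sqrt{\delta}$. Therefore
\[
h(\mu)+t\mu(\phi)\le h+t\,m(\phi)-\delta+|t|\,C\sqrt{\delta}.
\]

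\textbf{Step 3 (optimizing in $\delta$).} Put $s=\sqrt{\delta}$. The function $-s^2+|t|Cs$ is maximized at $s=|t|C/2$, with maximum value $C^2t^2/4$. Hence
\[
h(\mu)+t\mu(\phi)\le h+t\,m(\phi)+\tfrac{C^2}{4}t^2 .
\]
Taking the supremum over $\mu$ gives \eqref{eqn:P-leq} with $M=C^2/4$. This constant is independent of $\phi$ and of $n$, because $C$ comes from Theorem \ref{thm: Main Theorem} and depends only on $\Sigma$.

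\textbf{Points to check.} One should confirm that elements of $\OmH_n$ are genuinely continuous, so that the pressure is defined. They are: $\dH_n$ depends only on coordinates $0,\dots,n-1$, so $\phi\in\OmH_n$ is constant on $n$-cylinders and hence locally constant. Beyond that, all the difficulty is in the two earlier theorems, and no step here is a real obstacle. The only delicate point is that the bound of Step 2 is uniform over all $\mu$, including $\mu=m$, where it is trivial.
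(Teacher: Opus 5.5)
Your argument is correct and is essentially the paper's proof: both combine the variational principle with Theorems \ref{thm: dual form of d_bar} and \ref{thm: Main Theorem}, then maximize a concave quadratic, and both arrive at $M = C^2/4$. The only cosmetic difference is that the paper first proves $H(a) \le \htop(\Sigma) - C^{-2}(a-\bar{a})^2$ on the level sets $M^\phi_a$ and optimizes over $s = a - \bar{a}$, whereas you optimize directly over $\sqrt{\delta}$, which avoids introducing $H(a)$.
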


\begin{remark}
Theorem \ref{thm: pressure estimates} provides constraints on the ``shape'' of the pressure function $\phi \mapsto P(\phi)$, and in particular, on its one-dimensional cross-sections $t\mapsto P(t\phi)$ determined by $\phi \in \OmH$, by providing an upper bound on the ``average curvature near $0$''. 
The question of what constraints are placed on one-dimensional cross-sections of the pressure function for various classes of functions has been examined recently by Kucherenko and Quas \cite{KQ22,kucherenko2023asymptotic} 
and by Ma and Pollicott \cite{MP24}.
It is worth comparing \eqref{eqn:P-leq} with \cite[Theorem 4]{kucherenko2023asymptotic},
which provides a lower bound on the average curvature.
\end{remark}

\subsection{Other related literature}\label{sec:Pinsker}

The first proof that a mixing SFT has a unique MME was given by Parry \cite{parry1964intrinsic}, who used conditional measures to show that any MME must be a Markov measure, and that among Markov measures, there is exactly one MME. A different proof of uniqueness, based on showing that any measure that is mutually singular with respect to the MME must have smaller entropy, was given by Adler and Weiss \cite{AW67,AW70,bW73}, and extended by Bowen; see \cite{CT21} for more details.
Ruelle provided a third proof of uniqueness, based on showing (Gateaux) differentiability of the pressure function \cite{dR73}.

A discussion of the literature on effective uniqueness bounds can be found in the work of R\"uhr and Sarig \cite{ruhr2022effective}. They refer to \eqref{eqn: EKP by Kadyrov} as an \emph{Einsiedler--Kadyrov--Polo (EKP) inequality}, because prior to Kadyrov's work in \cite{kadyrov2015effective}, similar bounds were established in the doctoral thesis of Polo \cite{polo2011equidistribution}, who credits Einsiedler with outlining the proof in the case of the doubling map. 
Since then, estimates similar to \eqref{eqn: EKP by Kadyrov} have been obtained in various settings \cite{rR16,sK17,iK17,rR21,ruhr2022effective}.

We remark that our approach, like Kadyrov's, carries a certain echo of Parry's original proof of uniqueness, in the sense that a crucial role is played by studying the conditional measures of $\mu$ and $m$. The approach used by R\"uhr and Sarig, on the other hand, follows in the spirit of Ruelle's proof of uniqueness via differentiability, relying on proving appropriate properties of the topological pressure function.


Finally, we mention that another line of inquiry, with some similarities to the one pursued here, studies how the unique equilibrium state associated to a H\"older continuous potential $\phi$ varies with respect to $\phi$. The dependence can quickly be shown to be weak*-continuous, and 
it has been shown that at least for the full shift,
this can in fact be strengthened to $\bar{d}$-continuity \cite{coelho1998criteria,bhullar2026continuity}.

\subsection{Outline of paper}\label{sec:outline}

In Section \ref{section: Defn}, we give some background material, which includes everything needed to prove Theorem \ref{thm: dual form of d_bar}. Section \ref{section: EKP proof} contains the proof of Theorem \ref{thm: Main Theorem}, the bulk of which consists of translating arguments of Marton from \cite{marton1998measure} into our present setting and filling in details to make the presentation as self-contained as possible. In Section \ref{section: Pressure}, we explain how Theorems \ref{thm: Main Theorem} and \ref{thm: dual form of d_bar} can be used to deduce Theorem \ref{thm: pressure estimates}, following the ideas of R\"uhr and Sarig \cite{ruhr2022effective}.

\section{Background definitions and notation} \label{section: Defn}

\subsection{Topological entropy and pressure}\label{sec:h-and-P}

We recall some of the basic notions from thermodynamic formalism in the context of shift spaces, referring to \cite{walters2000introduction} for further details. Let $S$ be a finite set (the \emph{alphabet}), and $S^\mathbb{N}$ the set of all infinite sequences of symbols from $S$, which we call the \emph{full shift}. Given $x\in S^\mathbb{N}$ and $i,j\in \mathbb{N}$ with $i\leq j$, we write $x_{[i,j]} = x_i x_{i+1} \cdots x_j$ for the subword of $x$ that appears in indices $i$ through $j$. We will also write $x_{(i,j]}$, $x_{[i,j)}$, and $x_{(i,j)}$ to denote the corresponding subwords where one or both of $x_i$ and $x_j$ is omitted.

The \emph{shift map} $\sigma \colon S^{\mathbb{N}} \to S^\mathbb{N}$ is defined by $\sigma(x)_n = x_{n+1}$. It is continuous with respect to the metric
$\rho(x,y) := 2^{-\min \{ n\in \mathbb{N} : x_i \neq y_i \}}$,
where by convention, $\min \emptyset = \infty$ and $2^{-\infty} = 0$. 
A \emph{subshift} (or \emph{shift space}) is a closed subset $\Sigma \subset S^\mathbb{N}$ such that $\sigma(X) \subset \Sigma$.
Given $n\in \mathbb{N}$ and a word $w\in S^n$, the corresponding \emph{cylinder} in $\Sigma$ is
\[
[w] := \{x\in \Sigma : x_{[1,n]} = w\}.
\]
The \emph{language} of $\Sigma$ is $\mathcal{L} := \bigcup_{n\in \mathbb{N}} \mathcal{L}_n$, where $\mathcal{L}_n := \{ w\in S^n : [w] \neq \emptyset \}$.
The \emph{topological entropy} of $\Sigma$ is
\[
\htop(\Sigma) := \lim_{n\to\infty} \frac 1n \log \# \mathcal{L}_n,
\]
where the limit exists by submultiplicativity of $\#\mathcal{L}_n$.
More generally,
given a continuous \emph{potential function} $\phi \colon \Sigma \to \mathbb{R}$, the \emph{topological pressure} of $\phi$ is
\[
P(\phi) := \lim_{n\to\infty} \frac 1n \log \sum_{w\in \mathcal{L}_n}
e^{\sup \{ \Sn \phi(x) : x\in [w] \}},
\quad
\text{where $\Sn\phi(x) := \sum_{j=0}^{n-1} \phi(\sigma^j x)$.}
\]

\subsection{Invariant measures}

Given a compact metric space $X$, we will write $\MMM(X)$ for the set of all Borel probability measures on $X$. In the remainder of the paper, the term ``measure'' will always mean ``Borel probability measure''.
When $X = \Sigma$ is a shift space, we will write $\MMM_\sigma(\Sigma)$ for the set of all shift-invariant measures, that is, those $\mu \in \MMM(\Sigma)$ with the property that $\sigma_* \mu = \mu$.

Given a measure $\mu$ on $\Sigma$ and a natural number $k$, we write $\mu_k$ for the measure induced on $S^k$ by
\begin{equation}\label{eqn:mu-k}
\mu_k(w) := \mu([w])
\quad\text{for all } w\in S^k.
\end{equation}
When $\mu$ is shift-invariant, its \emph{entropy} is
\begin{equation}\label{eqn:h-H}
h(\mu) := \lim_{k\to\infty} \frac 1k \log H_k(\mu),
\quad\text{where }
H_k(\mu) := \sum_{w\in S^k} -\mu(w) \log \mu(w),
\end{equation}
and the limit exists by subadditivity.
(Here we use the convention that $0\log 0 = 0$.)
By the \emph{variational principle}, for every subshift $\Sigma \subset S^\mathbb{N}$ and every continuous potential $\phi \colon \Sigma \to \mathbb{R}$, we have
\[
P(\phi) = \sup_{\mu \in \MMM_\sigma (\Sigma)} \big(h(\mu) + \mu(\phi)\big),
\]
where we use the notation $\mu(\phi) := \int_\Sigma \phi \,d\mu$. An invariant measure achieving the supremum is an \emph{equilibrium state} for $\phi$. When $\phi\equiv 0$, the variational principle becomes
\[
\htop(\Sigma) = \sup_{\mu \in \MMM_\sigma(\Sigma)} h(\mu),
\]
and a measure achieving the supremum is a \emph{measure of maximal entropy} (MME).

We restrict our attention to the following class of shift spaces.
A \emph{transition matrix} $A \colon S\times S \to \{0,1\}$ determines a \emph{shift of finite type} (SFT)
\[
\Sigma := \{ x\in S^\mathbb{N} : A(x_n,x_{n+1}) = 1 \text{ for all } n\in \mathbb{N} \}.
\]
The SFT is \emph{mixing} if there exists $n\in \mathbb{N}$ such that all entries of $A^n$ are positive.

A matrix $T = (T_{ab})_{a,b\in S}$ is \emph{stochastic} if for every $a\in S$, the row vector $(T_{ab})_{b\in S}$ is a probability vector. Given a stochastic matrix $T$ of \emph{transition probabilities}, and a probability vector $q = (q_a)_{a\in S}$, the corresponding \emph{Markov measure} is the probability measure $\mu$ on $S^\mathbb{N}$ defined by
\[
\mu_k(w) := q_{w_1} T_{w_1 w_2} T_{w_2 w_3} \cdots T_{w_{k-1} w_k}
\quad\text{for all } w\in S^k.
\]
The measure $\mu$ is shift-invariant if and only if $q T = q$. If there exists $n\in \mathbb{N}$ such that $(T^n)_{ab}>0$ for all $a,b\in S$, then there exists a unique such $q$, and the resulting Markov measure $\mu$ is mixing.

If $\Sigma$ is the SFT associated to a transition matrix $A$, and if the stochastic matrix $T$ has the probability that $T_{ab}=0$ whenever $A_{ab} = 0$, then $\mu$ is supported on $\Sigma$. 
If $\Sigma$ is mixing, then by the Perron--Frobenius theorem it has a simple eigenvalue $\lambda>0$ such that every other eigenvalue $\beta$ satisfies $|\beta| < \lambda$. The corresponding eigenvectors $\ell,r$ have positive entries, and if they are normalized so that $\sum_{a\in S} \ell_a r_a = 1$, then the probability vector defined by $q_a := \ell_a r_a$ is a stationary vector for the stochastic matrix defined by $T_{ab} := \frac{A_{ab} r_b}{\lambda \ell_a}$. The resulting Markov measure $m$ is the \emph{Parry measure} for $\Sigma$; it is the unique MME \cite{parry1964intrinsic}, and a simple computation shows that there exists $Q\geq 1$ such that $m$ satisfies the \emph{Gibbs property}
\begin{equation}\label{eqn:Gibbs}
Q^{-1} e^{-k\htop(\Sigma)}
\leq m_k(w) \leq Q e^{-k\htop(\Sigma)}
\quad\text{for all } k\in \mathbb{N} \text{ and } w\in S^k.
\end{equation}

\subsection{Metrics on spaces of measures}

We will consider the Wasserstein metric from \eqref{eqn : W_1} and \eqref{eqn:W1} not just in the setting of $\Sigma$ equipped with the metric in \S\ref{sec:h-and-P}, but also on spaces of finite words equipped with both the discrete and Hamming metrics. First, we recall some general theory.

\subsubsection{Wasserstein metric}

Given a compact metric space $(X,\rho)$, and two probability measures $\mu,\nu \in \mathcal{M}(X)$, once again let $\CCC(\mu,\nu) \subset \MMM(X\times X)$ denote the set of all couplings of $\mu$ and $\nu$. Then the Wasserstein distance between $\mu$ and $\nu$ is defined by
\begin{equation}\label{eqn:Was}
W_1(\mu,\nu) := \inf_{\theta \in \CCC(\mu,\nu)} \int_{X^2} \rho(x,y) \,d\theta(x,y),
\end{equation}
just as in \eqref{eqn:W1}. An \emph{optimal coupling} of $\mu$ and $\nu$ is a coupling $\theta \in \CCC(\mu,\nu)$ that achieves this infimum. Since $X$ is compact, every pair of measures admits at least one optimal coupling.

Just as in \eqref{eqn : W_1}, Kantorovich--Rubinstein duality shows that \eqref{eqn:Was} is equivalent to
\begin{equation}\label{eqn:KR}
W_1(\mu,\nu) = \sup \{ |\mu(\phi) - \nu(\phi)| : \phi \in \Lip^1(X) \}.
\end{equation}
In this case when $X$ is finite, it will sometimes be convenient to write the integral of a function $f\colon X\to \RR$ using the probabilistic notation
\[
\EE[f,\mu] := \int_X f \,d\mu = \sum_{x\in X} f(x) \mu(x).
\]
Thus the characterizations of the Wasserstein metric in \eqref{eqn:Was} and \eqref{eqn:KR} can be written as
\begin{equation}\label{eqwn:DTV-2}
\DTV(\mu,\nu) = \inf_{\theta \in \CCC(\mu,\nu)} \EE[\rho,\theta]
= \sup \{ | \EE[\phi,\mu] - \EE[\phi,\nu]| : \phi \in \Lip^1(X) \}.
\end{equation}
We will be particularly interested in the cases when $\rho$ is either the discrete metric or the Hamming metric, and we discuss these next.

\subsubsection{Using the discrete metric}

Let $X$ be a finite set, and let $\rho$ be the discrete metric on $X$. 
The resulting Wasserstein metric on $\MMM(X)$ is the \emph{total variation metric} \cite[Chapter 6]{Villani2008OptimalTO}, which we denote $\DTV$, and which also admits the description
\begin{equation}\label{eqn:DTV}
\DTV(\mu,\nu) := \frac 12 \sum_{a\in X} |\mu(a) - \nu(a)|.
\end{equation}
Given $\mu,\nu \in \MMM(X)$, the \emph{Kullback--Liebler divergence} of $\mu$ with respect to $\nu$ is
\begin{equation}\label{eqn:KL}
D(\mu\|\nu) := \sum_{a\in X} \mu(a) \log \Big( \frac{\mu(a)}{\nu(a)} \Big).
\end{equation}
These two quantities are related by \emph{Pinsker's inequality}:

\begin{lem}\label{lem:Pinsker}
Given any finite set $X$, and any probability measures $\mu,\nu$ on $X$, we have
\begin{equation}\label{eqn:Pinsker}
\DTV(\mu,\nu) \leq \sqrt{\frac 12 D(\mu\|\nu)}.
\end{equation}
\end{lem}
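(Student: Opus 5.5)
We prove Pinsker's inequality in the form $D(\mu\|\nu) \ge 2\,\DTV(\mu,\nu)^2$. The plan is to reduce to a two-point space and then verify a one-variable calculus inequality.

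\emph{Reduction to two points.} Let $A := \{a\in X : \mu(a) > \nu(a)\}$. Then by \eqref{eqn:DTV} we have $\DTV(\mu,\nu) = \mu(A) - \nu(A)$. Write $p := \mu(A)$ and $q := \nu(A)$. Consider the map $X \to \{0,1\}$ sending $A$ to $1$ and $X\setminus A$ to $0$. The log-sum inequality $\sum_i a_i \log(a_i/b_i) \ge (\sum a_i)\log(\sum a_i/\sum b_i)$ follows from convexity of $t\mapsto t\log t$ (Jensen). Applying it separately on $A$ and on $X\setminus A$ gives the data-processing bound
\[
D(\mu\|\nu) \ge d(p\|q) := p\log\frac pq + (1-p)\log\frac{1-p}{1-q}.
\]
If some $\nu(a)=0$ while $\mu(a)>0$, then $D=\infty$ and there is nothing to prove. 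Zero terms are handled by the convention $0\log 0 = 0$.

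\emph{Binary case.} It remains to show $d(p\|q) \ge 2(p-q)^2$ for $0\le q\le p\le 1$. Fix $p$ and set
\[
g(q) := d(p\|q) - 2(p-q)^2 .
\]
Then $g(p) = 0$, and
\[
g'(q) = -\frac pq + \frac{1-p}{1-q} + 4(p-q) = (q-p)\Big(\frac{1}{q(1-q)} - 4\Big).
\]
Since $q(1-q)\le 1/4$, the second factor is nonnegative, so $g'(q)\le 0$ for $q\le p$. Hence $g$ is nonincreasing on $(0,p]$, which gives $g(q)\ge g(p)=0$. The endpoint cases follow by continuity or are trivially infinite. Combining the two steps yields $\DTV(\mu,\nu)^2 \le \frac12 d(p\|q)\le \frac12 D(\mu\|\nu)$.

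The only mildly delicate step is the reduction, specifically applying the log-sum inequality correctly with zero masses. The calculus in the binary case is routine.
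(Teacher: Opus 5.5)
The paper does not prove this lemma. It quotes it as the classical Pinsker inequality and uses it as a black box in \S\ref{sec:back-to-d-bar}, so there is no argument to compare against. Your proposal supplies the standard textbook proof, and it is correct:
\begin{itemize}
\item The identity $\DTV(\mu,\nu)=\mu(A)-\nu(A)$ follows from \eqref{eqn:DTV} because $\sum_{A}(\mu-\nu)=\sum_{X\setminus A}(\nu-\mu)$.
\item The log-sum inequality on the two cells $A$ and $X\setminus A$ gives $D(\mu\|\nu)\ge d(p\|q)$.
\item Your derivative $g'(q)=(q-p)\bigl(\frac{1}{q(1-q)}-4\bigr)$ is right, so $g$ is nonincreasing on $(0,p]$ and $g(q)\ge g(p)=0$.
\end{itemize}
The degenerate cases are also handled correctly. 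If $A=\emptyset$, then $\mu\le\nu$ pointwise, which forces $\mu=\nu$. If $A\neq\emptyset$, then $q<p\le 1$, so the only boundary cases are $q=0$, where $d(p\|q)=\infty$, and $p=1$, where $g$ stays continuous up to $q=1$.

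A route closer to the paper's duality viewpoint also exists. Combine the Donsker--Varadhan bound $D(\mu\|\nu)\ge t\,\mu(\phi)-\log\nu(e^{t\phi})$ with Hoeffding's lemma $\log\nu(e^{t\phi})\le t\,\nu(\phi)+t^2/8$ for $\phi$ taking values in $[0,1]$. Optimizing in $t$ gives $D(\mu\|\nu)\ge 2(\mu(\phi)-\nu(\phi))^2$, and taking $\phi$ to be the indicator of $A$ recovers the lemma. This is the same Legendre-transform computation as in \S\ref{section: Pressure}, run in the opposite direction. Your route is more elementary because it avoids Hoeffding's lemma.
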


\subsubsection{Using the Hamming metric}\label{sec:Ham-Was}

Now we restrict our attention further to the case of measures on the finite set $S^k$. In addition to the discrete metric, we can also equip $S^k$ with the normalized Hamming metric $\dH_k$ from \eqref{eqn:dnH}, observing that
\begin{equation}\label{eqn:dH-rho}
\dH_k(v,w) = \frac 1k \sum_{j=1}^k \rho(v_j, w_j),
\end{equation}
where $\rho$ is the discrete metric on $S$. We will write $\bar{d}_k$ for the Wasserstein metric on $\MMM(S^k)$ induced by $\dH_k$, so \eqref{eqn:Was} and \eqref{eqn:KR} become
\begin{equation}\label{eqn:bardk}
\bar{d}_k(\mu,\nu)
= \inf_{\theta\in \CCC(\mu,\nu)} 
\EE[\dH_k,\theta]
= \sup \{ |\EE[\phi,\mu] - \EE[\phi,\nu]| : \phi \in \OmH_k \},
\end{equation}
where as in the paragraph following \eqref{eqn:dnH}, $\OmH_k$ is the set of functions $S^k\to \RR$ that are $1$-Lipschitz with respect to $\dH_k$.

The connection between this family of metrics and the $\bar{d}$-metric on $\MMM_\sigma(S^\NN)$ defined in \eqref{eqn:d-bar} is provided by the following result, whose proof can be found in \cite[Lemma 15.24(4) and Theorem 15.25]{glasner2003ergodic}: given any shift-invariant measures $\mu,\nu \in \MMM_\sigma(S^\NN)$, we have
\begin{equation}\label{eqn:dbar-lim-sup}
\bar{d}(\mu,\nu)
= \lim_{k\to\infty} \bar{d}_k(\mu_k,\nu_k)
= \sup_{k\in \NN} \bar{d}_k(\mu_k,\nu_k).
\end{equation}
Combining \eqref{eqn:bardk} and \eqref{eqn:dbar-lim-sup} establishes Theorem \ref{thm: dual form of d_bar}.

The bulk of our main proof will be devoted to establishing a relationship between $\DTV$ and $\bar{d}_k$ (see Proposition \ref{prop:dTV}), so that Pinsker's inequality can be used to bound the $\bar{d}$-metric.
The following result plays no formal role in our later arguments, but is instructive in illustrating the kind of relationship we hope to obtain.

\begin{lem}\label{lem:d-bar-TV}
Consider two probability measures $\nu^1$ and $\nu^2$ on $S^k$ that are product measures in the sense that there exist probability measures $\nu^i_j$ on $S$ such that
\begin{equation}\label{eqn:product}
\nu^i(w) = \prod_{j=1}^k \nu^i_j(w_j)
\quad\text{for all } w\in S^k, \  i\in\{1,2\}.
\end{equation}
Then the $\bar{d}$-metric and the total variation metric are related by
\begin{equation}\label{eqn:d-bar-TV}
\bar{d}_k(\nu^1, \nu^2) = \frac 1k \sum_{j=1}^k \DTV(\nu^1_j, \nu^2_j).
\end{equation}
\end{lem}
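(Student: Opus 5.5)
We prove \eqref{eqn:d-bar-TV} by establishing the two inequalities separately: the upper bound by building an explicit coupling, the lower bound by exhibiting a single test function in the dual description \eqref{eqn:bardk}.

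\emph{Upper bound.} For each $j$, choose an optimal coupling $\theta_j \in \CCC(\nu^1_j,\nu^2_j)$ for the discrete metric on $S$. Then $\theta_j(\{a\neq b\}) = \DTV(\nu^1_j,\nu^2_j)$, which is the Wasserstein characterization of total variation. Define $\theta$ on $S^k\times S^k$ as the product
\[
\theta(v,w) := \prod_{j=1}^k \theta_j(v_j,w_j).
\]
By \eqref{eqn:product}, its marginals are $\nu^1$ and $\nu^2$, so $\theta\in\CCC(\nu^1,\nu^2)$. Using \eqref{eqn:dH-rho} and linearity of expectation,
\[
\EE[\dH_k,\theta] = \frac1k\sum_{j=1}^k \theta_j(\{a\neq b\}) = \frac1k\sum_{j=1}^k \DTV(\nu^1_j,\nu^2_j).
\]
Hence $\bar d_k(\nu^1,\nu^2)$ is at most the right-hand side of \eqref{eqn:d-bar-TV}.

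\emph{Lower bound.} We use the dual form in \eqref{eqn:bardk}. For each $j$, let $A_j := \{a\in S : \nu^1_j(a) > \nu^2_j(a)\}$, so that
\[
\nu^1_j(A_j)-\nu^2_j(A_j) = \DTV(\nu^1_j,\nu^2_j).
\]
Set
\[
\phi(w) := \frac1k\sum_{j=1}^k \ind_{A_j}(w_j).
\]
Changing the coordinate $w_j$ changes $\phi$ by at most $1/k$. Summing over the coordinates where $v$ and $w$ differ gives $|\phi(v)-\phi(w)|\le \dH_k(v,w)$, so $\phi\in\OmH_k$. Since $\EE[\ind_{A_j}(w_j),\nu^i] = \nu^i_j(A_j)$ depends only on the $j$-th marginal,
\[
\EE[\phi,\nu^1]-\EE[\phi,\nu^2] = \frac1k\sum_{j=1}^k \DTV(\nu^1_j,\nu^2_j).
\]
This gives the reverse inequality.

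The only real subtlety is in the lower bound. Without duality, one would have to argue directly that any coupling $\theta$ of $\nu^1,\nu^2$ has $j$-th marginal coupling $\nu^1_j,\nu^2_j$, and hence $\theta(v_j\neq w_j)\ge \DTV(\nu^1_j,\nu^2_j)$. That argument also works and shows the lower bound does not even need the product structure. The product structure is used only in the upper bound, to guarantee that the product of the optimal one-coordinate couplings has the correct marginals.
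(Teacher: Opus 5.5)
Your proof is correct. The upper bound is exactly the paper's argument: take the product of optimal one-coordinate couplings, and use \eqref{eqn:dH-rho} to get $\EE[\dH_k,\theta]=\frac1k\sum_j\EE[\rho,\theta_j]$.

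For the lower bound you take a dual route rather than the paper's primal one. The paper projects an arbitrary coupling $\theta\in\CCC(\nu^1,\nu^2)$ onto its coordinate couplings $\theta_j\in\CCC(\nu^1_j,\nu^2_j)$, uses $\EE[\rho,\theta_j]\ge\DTV(\nu^1_j,\nu^2_j)$, and sums via \eqref{eqn:EE}. You instead exhibit the explicit witness $\phi(w)=\frac1k\sum_j\ind_{A_j}(w_j)\in\OmH_k$. This needs only the trivial half of \eqref{eqn:bardk}: for every coupling $\theta$, $|\EE[\phi,\nu^1]-\EE[\phi,\nu^2]| = |\sum_{(v,w)}(\phi(v)-\phi(w))\theta(v,w)|\le\EE[\dH_k,\theta]$. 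So no real input from Kantorovich--Rubinstein duality is required.

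What your version buys is an explicit certificate of optimality. The product coupling and the separable function $\phi$ form a primal--dual pair with equal values, so you identify both an optimal coupling and a maximizer in the dual formula. The paper's version stays entirely on the coupling side and reuses the single identity \eqref{eqn:EE} for both inequalities. That matches its later strategy of bounding $\bar{d}_{nk}$ by constructing couplings in Proposition \ref{prop:dTV}. As you note, both lower-bound arguments use only the one-dimensional marginals, not the product structure.
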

\begin{proof}
Given any coupling $\theta \in \CCC(\nu^1,\nu^2)$ and any $1\leq j\leq k$, the measure $\theta_j$ on $S^2$ defined by
\[
\theta_j(a,b) := 
\theta(\{(x,y) \in (S^k)^2 : x_j = a \text{ and } y_j = b\})
\] 
is a coupling of $\nu^1_j$ and $\nu^2_j$, and 
using \eqref{eqn:dH-rho}, 
we have
\begin{equation}\label{eqn:EE}
\begin{aligned}
\mathbf{E}[\dH_k, \theta]
&= \sum_{(x,y) \in (S^k)^2} \dH_k(x,y) \theta(x,y)
= \frac 1k \sum_{j=1}^k \sum_{(x,y) \in (S^k)^2}  \rho(x_j, y_j) \theta(x,y) \\
&= \frac 1k \sum_{j=1}^k \sum_{(a,b) \in S^2} \rho(a,b) \theta_j(a,b)
= \frac 1k \sum_{j=1}^k \mathbf{E} [\rho, \theta_j].
\end{aligned}
\end{equation}
Since $\mathbf{E}[\rho,\theta_j] \geq \DTV(\nu_j^1, \nu_j^2)$, we can take an infimum over all $\theta \in \CCC(\nu^1, \nu^2)$ and obtain
\[
\bar{d}_k(\nu^1, \nu^2) \geq \frac 1k \sum_{j=1}^k \DTV(\nu^1_j, \nu^2_j).
\]
To prove \eqref{eqn:d-bar-TV}, it remains to show the reverse inequality. To this end, consider any $k$-tuple of couplings $\theta_j \in \CCC(\nu_j^1,\nu_j^2)$ for $1\leq j\leq k$; then the probability measure $\theta$ defined on $(S^k)^2$ by
\[
\theta(x,y) := \prod_{j=1}^k \theta_j(x_j,y_j)
\]
is a coupling of $\nu^1$ and $\nu^2$, and using \eqref{eqn:EE}, we have
\[
\frac 1k \sum_{j=1}^k \mathbf{E} [\rho, \theta_j]
= \mathbf{E}[\dH_k, \theta] \geq \bar{d}_k(\nu^1,\nu^2).
\]
Taking an infimum over all $(\theta_j)_j$ completes the proof.
\end{proof}

\section{Effective uniqueness using informational divergence \label{section: EKP proof}}


This section is devoted to the proof of the effective uniqueness result in Theorem \ref{thm: Main Theorem}. 

\subsection{Overview}

We start by showing that the entropy gap on the right-hand side of \eqref{eqn : d_bar inequality in main theorem} can be expressed as the rate of growth of the Kullback--Liebler divergence of $\mu$ with respect to the MME.
Recall that $\mu_n$ denotes the measure that $\mu$ induces on $S^n$ by \eqref{eqn:mu-k}, and $D$ is defined by \eqref{eqn:KL}.

\begin{lem}\label{lem: KL-divergence in terms of entropy}
Let $\Sigma$ be a mixing SFT, and let $m$ be its unique MME (the Parry measure). Then for any $\sigma$-invariant measure $\mu$ on $\Sigma$, we have
\begin{equation}\label{eqn:h-gap}
\lim_{n\to \infty} \frac{1}{n} D\left( \mu_n \| m_n \right) = \htop(\Sigma) - h(\mu).
\end{equation}
\end{lem}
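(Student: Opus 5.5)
Write $h := \htop(\Sigma)$. The plan is to expand the divergence into a cross-entropy term minus an entropy term, and to estimate the cross-entropy term with the Gibbs property \eqref{eqn:Gibbs}. By the definition \eqref{eqn:KL},
\[
D(\mu_n\|m_n) = \sum_{w\in S^n} \mu_n(w)\log\mu_n(w) - \sum_{w\in S^n}\mu_n(w)\log m_n(w) = -H_n(\mu) - \sum_{w\in S^n}\mu_n(w)\log m_n(w).
\]
We only sum over words with $\mu_n(w)>0$. Since $\mu$ is supported on $\Sigma$, every such $w$ lies in $\mathcal{L}_n$. The Gibbs bound holds for the Parry measure on words in the language, so $m_n(w)>0$ for these words and every term is finite. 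The only care needed is exactly this: $D$ is finite because $\mu_n \ll m_n$, and \eqref{eqn:Gibbs} is only meaningful on $\mathcal{L}_n$, where $m_n$ is positive.

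For $w\in\mathcal{L}_n$, taking logarithms in \eqref{eqn:Gibbs} gives
\[
-\log m_n(w) = n h + O(\log Q),
\]
with the error at most $\log Q$ in absolute value. Averaging this against the probability vector $\mu_n$ yields
\[
-\sum_{w} \mu_n(w) \log m_n(w) = n h + O(\log Q),
\]
with the error bounded uniformly in $n$.

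Combining the two displays, we get
\[
\frac1n D(\mu_n\|m_n) = h - \frac1n H_n(\mu) + O\Big(\frac{\log Q}{n}\Big).
\]
The error term tends to $0$. The entropy term satisfies $\frac1n H_n(\mu)\to h(\mu)$ by the definition \eqref{eqn:h-H}; that display contains a typo ($\frac1k \log H_k$), and the intended meaning is $\frac1k H_k$. Letting $n\to\infty$ therefore gives \eqref{eqn:h-gap}. Mixing is used only to guarantee that $m$ exists and has the Gibbs property. There is no real obstacle beyond these bookkeeping points.
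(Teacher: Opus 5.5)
Your proof is correct and follows the paper's argument: split $D(\mu_n\|m_n)$ into the cross-entropy term minus $H_n(\mu)$, use the Gibbs property \eqref{eqn:Gibbs} to write the cross-entropy as $n\htop(\Sigma)$ up to an error of at most $\log Q$, then divide by $n$ and let $n\to\infty$. Two of your side remarks are correct and address points the paper glosses over: \eqref{eqn:Gibbs} should only be applied to words in $\mathcal{L}_n$, which suffices because $\mu_n$ is supported there, and \eqref{eqn:h-H} should read $\frac1k H_k(\mu)$.
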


\begin{proof}
Using \eqref{eqn:KL}, we have the following for each $n\in \NN$:
\begin{equation}\label{eqn:Dn}
\begin{aligned}
D\left( \mu_n \| m_n \right)&= \sum_{w\in S^{n}} \mu_n(w)\log\Big( \frac{\mu_n(w)}{m_n(w)} \Big)\\
&= \sum_{w\in S^{n}} -\mu_n(w)\log m_n(w) - \sum_{w\in S^{n}}-\mu_n(w)\log \mu_n(w).
\end{aligned}
\end{equation}
Using the Gibbs property in \eqref{eqn:Gibbs}, we have the following for every $w\in S^n$:
\[
\log m_n(w) = -n\htop(\Sigma) \pm \log Q,
\]
where we use the notation $A=B\pm C$ to mean $B-C\leq A \leq B+C$.
Returning to \eqref{eqn:Dn} and recalling the definition of $H_n(\mu)$ in \eqref{eqn:h-H}, we have
\begin{align*}
D \left( \mu_n \| m_n \right) &= \sum_{w\in S^{n}} -\mu_n(w)(-n\htop(\Sigma) \pm \log Q) - H_{n}(\mu)\\
 &= \pm \log(Q)+ n\htop(\Sigma) - H_{n}(\mu).
\end{align*}
Dividing both sides by $n$ and sending $n\to\infty$ gives \eqref{eqn:h-gap}.
\end{proof}

Lemma \ref{lem: KL-divergence in terms of entropy} and \eqref{eqn:dbar-lim-sup} will allow us to deduce Theorem \ref{thm: Main Theorem} from the following analogous result for the finite-dimensional marginals.

\begin{thm}\label{thm: d_bar distance and KL-divergence}
Let $m$ be a mixing Markov measure on $S^\NN$, and let $T$ be the corresponding stochastic matrix of transition probabilities, so that there exists $k\in \NN$ such that every entry of $T^k$ is positive. Let $q>0$ be such that $(T^k)_{ab}\geq q$ for all $a,b\in S$. Then for every $n\in \NN$, and any $\mu \in \MMM(S^\NN)$, we have
\begin{equation}\label{eqn:Marton}
\bar{d}_{nk}(\mu_{nk},m_{nk})
\leq \frac{k}{q} \sqrt{\frac 2n D_{nk}(\mu_{nk} \| m_{nk})}.
\end{equation}
\end{thm}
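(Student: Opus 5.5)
We follow Marton's coupling argument for contracting Markov chains. The idea is to group coordinates into $n$ blocks of length $k$ and build a coupling of $\mu_{nk}$ and $m_{nk}$ block by block, so that the problem reduces to a "Markov chain of blocks" with a Doeblin-type contraction.

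Write $x=(x^{(1)},\dots,x^{(n)})$ with $x^{(i)}\in S^k$. Under $m$, the block sequence is a Markov chain on $S^k$ whose transition kernel $P(\cdot\mid u)$ depends only on the last letter of $u$. Since $(T^k)_{ab}\ge q$, the first letter of the next block has law $\ge q$ times the stationary-free uniform-type bound. More precisely, for any two last letters $a,a'$ we have
\[
\DTV\big(P(\cdot\mid a),P(\cdot\mid a')\big)\le 1-q,
\]
because the blocks share the same conditional law given their first letter, and the first-letter laws $(T_{a\cdot}\,\cdots)$ overlap in mass at least $q$. (Indeed, $T^k$ governs the letter $k$ steps later, so I will use the first letter of the block after next, or shift the blocking so that the $k$-step transition appears. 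I will fix this indexing carefully.)

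Next I disintegrate $\mu_{nk}$ sequentially. Let $\mu(\cdot\mid x^{(<i)})$ denote the conditional law of block $i$ given the previous blocks. By the chain rule for divergence,
\[
D(\mu_{nk}\|m_{nk})=\sum_{i=1}^n \EE\Big[D\big(\mu(\cdot\mid x^{(<i)})\,\big\|\,P(\cdot\mid x^{(i-1)})\big)\Big],
\]
with the expectation taken over $x\sim\mu$. I then construct a coupling $\theta$ of $x\sim\mu$ and $y\sim m$ inductively. Given the coupled past $(x^{(<i)},y^{(<i)})$, I couple $\mu(\cdot\mid x^{(<i)})$ and $P(\cdot\mid y^{(i-1)})$ by a maximal coupling in total variation. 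The probability of a mismatch in block $i$ is then at most
\[
\DTV\big(\mu(\cdot\mid x^{(<i)}),P(\cdot\mid x^{(i-1)})\big)+\DTV\big(P(\cdot\mid x^{(i-1)}),P(\cdot\mid y^{(i-1)})\big).
\]
Call the first term $\delta_i$. By the contraction above, the second term is at most $(1-q)\,\ind[x^{(i-1)}\ne y^{(i-1)}]$. Letting $p_i=\theta(x^{(i)}\ne y^{(i)})$, this gives the recursion $p_i\le \EE\delta_i+(1-q)p_{i-1}$, and hence
\[
\sum_i p_i\le \frac1q\sum_i \EE\delta_i.
\]
To finish, I bound the Hamming cost by block mismatches: $\dH_{nk}\le \frac1n\#\{i:x^{(i)}\ne y^{(i)}\}$. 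Each mismatched block contributes at most $k$ letters out of $nk$, which gives cost at most $1/n$ per mismatched block; the factor $k$ in the statement leaves slack for the indexing shift. I then apply Pinsker (Lemma \ref{lem:Pinsker}) followed by Cauchy--Schwarz and the chain rule:
\[
\frac1n\sum_i\EE\delta_i\le\frac1n\sum_i\EE\sqrt{\tfrac12 D_i}\le\sqrt{\tfrac1{2n}D(\mu_{nk}\|m_{nk})}.
\]
Combining these gives $\bar d_{nk}\le \frac1q\sqrt{\frac{1}{2n}D}$, which is within the claimed bound $\frac kq\sqrt{\frac2nD}$.

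The main obstacle is the contraction step. The transition kernel of the $m$-block chain depends on the last letter, while the hypothesis controls $T^k$, i.e.\ the law $k$ steps ahead rather than one step ahead. I would first try to realize the $1-q$ contraction by coupling the full next block through its last letter, using $(T^k)_{ab}\ge q$ to merge the chains at the end of the block. Once merged at the last letter, the chains stay merged under $m$, and only one block of cost $\le k$ letters is paid per mismatch event; this is plausibly where the extra factor $k$ in the statement comes from.
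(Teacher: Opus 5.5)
\textbf{There is a genuine gap at the contraction step.} It is not an indexing issue that can be patched later. The bound $\DTV\big(P(\cdot\mid a),P(\cdot\mid a')\big)\le 1-q$ for the law of the \emph{entire} next block is false in general. That distance is at least the total variation distance between the first-letter laws $T_{a\cdot}$ and $T_{a'\cdot}$, and these can have disjoint supports even though $T^k>0$.

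For example, take $S=\{0,1,2\}$ with $T_{01}=T_{12}=1$ and $T_{20}=T_{22}=\tfrac12$. This matrix is primitive, since state $2$ lies on cycles of lengths $3$ and $1$. The block after $0$ begins with $1$ and the block after $1$ begins with $2$, so the two block laws are mutually singular. Shifting the blocking changes nothing, because a block always begins one step after the letter it is conditioned on.

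Consequently your recursion $p_i\le \EE\delta_i+(1-q)p_{i-1}$ for the full-block mismatch probability has no justification. A maximal total-variation coupling of $\mu(\cdot\mid x^{(<i)})$ with $P(\cdot\mid y^{(i-1)})$ says nothing about the last letters on the mismatch event, so nothing makes $p_i$ contract. For the same reason, your closing idea of merging the chains at the last letter cannot be carried out inside such a coupling.

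\textbf{What is missing is a change of both the coupling and the quantity you track.} Let $a,b$ be the last letters of the $\mu$-past and the $m$-past.
\begin{itemize}
\item Introduce an auxiliary block $w\sim P(\cdot\mid a)$.
\item Couple $x^{(i)}$ with $w$ maximally in total variation, so the mismatch probability is $\delta_i$.
\item Couple $w$ with $y^{(i)}\sim P(\cdot\mid b)$ by the identity if $a=b$. Otherwise couple them so that their last letters, with laws $(T^k)_{a\cdot}$ and $(T^k)_{b\cdot}$, agree with probability at least $q$.
\item Glue the two couplings along $w$.
\end{itemize}
This is essentially the paper's triple coupling $Z_j^{x,y}$, built from $\Gamma_j^y$ and $Y_j^{x,y}$.

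With this coupling, the last-letter mismatch probability $\pi_i$ does satisfy $\pi_i\le \EE\delta_i+(1-q)\pi_{i-1}$. The expected Hamming cost of block $i$ is at most $\EE\delta_i+\pi_{i-1}$. Hence the coupling has expected $\dH_{nk}$ at most $(1+q^{-1})\frac1n\sum_i \EE\delta_i$. Your Pinsker, Jensen and chain-rule step is correct and then yields the theorem, with room to spare, since $1+q^{-1}\le 2k/q$.

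The paper instead tracks the expected Hamming cost of the $m$-to-$m$ coupling, using its stay-coupled property. That quantity contracts only by the factor $1-q/k$, because agreement at the end of a block guarantees just one matching letter out of $k$. This, and not an indexing shift, is the source of the factor $k$ in $K=1+k/q$.
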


The bound in \eqref{eqn:Marton} is a version of a result proved by Marton \cite[Proposition 3]{marton1996bounding}.
We devote \S\ref{sec:TV-bound}--\S\ref{sec:back-to-d-bar} below to its proof, in which we follow Marton's argument, but translate it from her more probabilistic language into our present terminology, and organize it in a way that highlights the key elements on which we rely. The heart of the argument is Proposition \ref{prop:dTV}, which can be viewed as an extension of Lemma \ref{lem:d-bar-TV} to a broader class of measures $m$.

Once Theorem \ref{thm: d_bar distance and KL-divergence} is proved, Theorem \ref{thm: Main Theorem} can be deduced as follows: writing \eqref{eqn:Marton} as
\[
\bar{d}_{nk}(\mu_{nk},m_{nk})
\leq \frac{\sqrt{2} k^{3/2}}{q} \sqrt{\frac 1{nk} D_{nk}(\mu_{nk} \| m_{nk})}
\]
and sending $n\to\infty$, we see that the left-hand side converges to $\bar{d}(\mu,m)$ by \eqref{eqn:dbar-lim-sup}, while the divergence rate inside the square root converges to $\htop(\Sigma)-h(\mu)$ by Lemma \ref{lem: KL-divergence in terms of entropy}. This establishes \eqref{eqn : d_bar inequality in main theorem} with
\begin{equation}\label{eqn:C}
C := \frac{\sqrt{2} k^{3/2}}{q},
\end{equation}
completing the proof of Theorem \ref{thm: Main Theorem}, modulo Theorem \ref{thm: d_bar distance and KL-divergence}.

\subsection{A bound using total variation distance}\label{sec:TV-bound}

In the remainder of \S\ref{section: EKP proof}, we fix $k\in \NN$ as in Theorem \ref{thm: d_bar distance and KL-divergence}.
Given a measure $\mu$ on $S^\NN$, we will use the following notation for the conditional measure on $S^k$ induced by the first $\ell$ entries of $x$: given $\ell,k\in \mathbb{N}$ and $x\in S^\mathbb{N}$ or $x\in S^n$ with $n\geq \ell + k$, we write
\[
\mu_k^{x,\ell}(w)
:= \frac{\mu(x_{[1,\ell]} w)}{\mu(x_{[1,\ell]})}.
\]
When $\ell=0$, this becomes the measure $\mu_k$ on $S^n$ defined in \eqref{eqn:mu-k}.
Given $y\in S^\mathbb{N}$, or $y\in S^n$ for some $n \geq k$, we will also write
\[
\mu_k(y) := \mu_k(y_{[1,k]})
\quad\text{and}\quad
\mu_k^{x,\ell}(y) := \mu_k^{x,\ell}(y_{[1,k]}).
\]
We will use the following consequence of this conditional measure notation: given $k,n\in \mathbb{N}$ and $x\in S^{kn}$, we have
\begin{equation}\label{eqn:mun-cond}
\mu_{nk}(x) = \prod_{j=0}^{n-1} \mu_k^{x,jk}(x^{(j)}),
\end{equation}
where $x^{(j)} := x_{(jk, (j+1)k]}$ is the word of length $k$ that appears in $x$ immediately following position $jk$, so that
\[
x_{[1,nk]} = x^{(0)} x^{(2)} \cdots x^{(n-1)}.
\]
The following result can be viewed as an analogue of Lemma \ref{lem:d-bar-TV}, with \eqref{eqn:mun-cond} serving as the replacement for the restrictive ``product measure'' property required in \eqref{eqn:product}.

\begin{prop}\label{prop:dTV}
Let $m,T,k,q$ be as in Theorem \ref{thm: d_bar distance and KL-divergence},
and let $K := 1 + \frac{k}{q}$.
 Then for every $n\in \NN$ and $\mu \in \MMM(S^\NN)$, we have
\begin{equation}\label{eqn:dTV}
\bar{d}_{nk}(\mu_{nk},m_{nk})
\leq \frac Kn \sum_{j=0}^{n-1}
\sum_{y \in S^{jk}}
\mu(y)
\DTV(\mu_k^{y,jk},m_k^{y,jk}).
\end{equation}
\end{prop}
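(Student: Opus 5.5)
We prove \eqref{eqn:dTV} by a sequential (Markovian) coupling construction in blocks of length $k$, following Marton. We build a coupling $\theta$ of $\mu_{nk}$ and $m_{nk}$ one block at a time. Suppose the first $j$ blocks $(x^{(0)},\dots,x^{(j-1)})$ and $(y^{(0)},\dots,y^{(j-1)})$ have been chosen. We then choose the next pair of blocks $(x^{(j)},y^{(j)})$ from a coupling of the conditional measures $\mu_k^{x,jk}$ and $m_k^{y,jk}$. By \eqref{eqn:mun-cond} applied to both $\mu$ and $m$, the marginals of the resulting $\theta$ are $\mu_{nk}$ and $m_{nk}$. As in Lemma \ref{lem:d-bar-TV}, $\EE[\dH_{nk},\theta]$ equals $\frac1n\sum_j \EE[\dH_k(x^{(j)},y^{(j)}),\theta]$, so it suffices to bound each block's expected Hamming distance. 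The key point is that, because $m$ is Markov, $m_k^{y,jk}$ depends only on the last symbol $y_{jk}$. The difficulty is that $x$ and $y$ may differ in the past, so the natural quantity $\DTV(\mu_k^{x,jk},m_k^{y,jk})$ is not the one appearing on the right-hand side of \eqref{eqn:dTV}, namely $\DTV(\mu_k^{x,jk},m_k^{x,jk})$.

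The triangle inequality gives
\[
\DTV(\mu_k^{x,jk},m_k^{y,jk})\le \DTV(\mu_k^{x,jk},m_k^{x,jk})+\DTV(m_k^{x,jk},m_k^{y,jk}).
\]
The second term vanishes when $x_{jk}=y_{jk}$, and is at most $1$ otherwise. So at each step we take a maximal coupling (in total variation) of $\mu_k^{x,jk}$ and $m_k^{y,jk}$. We then need to show that a mismatch in the last symbol is corrected quickly. Here we use $(T^k)_{ab}\ge q$: the $k$-step conditional distributions of the final symbol of the next block, starting from any two symbols $a$ and $b$, overlap in mass at least $q$, i.e.\ they have total variation distance at most $1-q$.

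Let $e_j:=\theta(x_{jk}\ne y_{jk})$ and $\delta_j:=\sum_{y\in S^{jk}}\mu(y)\DTV(\mu_k^{y,jk},m_k^{y,jk})$. I would construct the coupling of blocks so that, when the last symbols agree, it is a maximal coupling of $\mu_k^{x,jk}$ and $m_k^{x,jk}$. This gives expected block Hamming distance at most $\delta$-type contributions, and end-symbol disagreement probability at most the same. When the last symbols disagree, we instead couple so that the end symbols agree with probability at least $q$ minus the TV error, while the Hamming cost is at most $1$. This should give a recursion of the form $e_{j+1}\le \delta_j+(1-q)e_j$, together with block cost at most $\delta_j+e_j$. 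Summing the recursion gives $\sum_j e_j\le \frac1q\sum_j\delta_j$. The per-block normalized Hamming cost is at most $1$ on mismatched blocks, but to be safe I would bound it by $\delta_j$ plus $k\cdot e_j$ symbol errors, so that the total is $\le \frac1n\sum_j(\delta_j+\frac kq\delta_j)$. This yields the factor $K=1+\frac kq$.

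The main obstacle is making the disagreement-recovery coupling rigorous. This requires a coupling of $\mu_k^{x,jk}$ and $m_k^{y,jk}$ that is simultaneously nearly maximal for the full block and achieves agreement of the final symbol with probability at least $q-\DTV(\mu_k^{x,jk},m_k^{x,jk})$ when $x_{jk}\ne y_{jk}$. I would first try the following. Couple $\mu_k^{x,jk}$ maximally with $m_k^{x,jk}$, and separately couple $m_k^{x,jk}$ with $m_k^{y,jk}$ via a coupling that matches final symbols with probability $\ge q$ (using the overlap of $T^k(x_{jk},\cdot)$ and $T^k(y_{jk},\cdot)$ and Markov bridges). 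Then glue the two couplings along the common marginal $m_k^{x,jk}$. A second difficulty is that $\delta_j$ must be averaged over $x\sim\mu$. This holds because the $x$-marginal of $\theta$ is exactly $\mu$, so $\EE[\DTV(\mu_k^{x,jk},m_k^{x,jk}),\theta]=\delta_j$.
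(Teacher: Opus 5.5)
Your proof is correct. Its skeleton matches the paper's: a block-by-block coupling in which, at step $j$, a total-variation-optimal coupling of the $\mu$- and $m$-conditionals with the \emph{same} past is glued to a coupling of the two $m$-conditionals with \emph{different} pasts. This is followed by a linear contraction recursion summed over $j$. Your $\delta_j$ is exactly the paper's $Q_j$, and which coordinate's past enters the total-variation coupling is just a relabeling.

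The genuine difference lies in the $m$--$m$ coupling and in the quantity you track.
\begin{itemize}
\item The paper takes $Y_j$ to be $\dH_k$-optimal and tracks its expected Hamming cost $R_j$. To get $R_j\le Q_{j-1}+(1-\frac qk)R_{j-1}$ via Lemma \ref{lem:Eleq}, it needs the ``stay-coupled'' property \eqref{eqn:stay-coupled} of the optimal coupling, which is only sketched there. Its contraction factor is only $1-\frac qk$, because a matched final symbol saves just $\frac1k$ of the block cost.
\item You build the $m$--$m$ coupling explicitly: a maximal coupling of the rows $((T^k)_{ac})_{c\in S}$ and $((T^k)_{bc})_{c\in S}$ for the final symbols, then bridges. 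You track the final-symbol mismatch probability $e_j$ directly, and you use only $\dH_k\le 1$ on mismatched blocks.
\end{itemize}
Your route needs no structural property of optimal couplings, and it contracts at rate $1-q$ instead of $1-\frac qk$.

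Your ``to be safe'' factor $k$ is unnecessary. The expected normalized Hamming cost of the $j$-th blocks is already at most $\delta_j+e_j$. With $e_0:=0$, your recursion therefore gives $\bar{d}_{nk}(\mu_{nk},m_{nk})\le \frac{1+1/q}{n}\sum_j\delta_j$. This implies the stated bound with $K=1+\frac kq$, and it would improve the constant $C$ in \eqref{eqn:C} by a factor of $k$.

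What the paper's formulation buys is mainly fidelity to Marton's framework, where the recursion is phrased through $\bar{d}_k(m_k^{x,jk},m_k^{y,jk})$. For the Markov case at hand, your version is both simpler and sharper.
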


We prove Proposition \ref{prop:dTV} in \S\S\ref{sec:candidate}--\ref{sec:pf-dTV}. Then in \S\ref{sec:back-to-d-bar}, we use 
\eqref{eqn:dTV} together with Pinsker's inequality and properties of Kullback--Liebler divergence to complete the proof of Theorem \ref{thm: d_bar distance and KL-divergence}.

\subsection{A candidate coupling}\label{sec:candidate}

In order to prove Proposition \ref{prop:dTV}, we will define, for each $n\in \mathbb{N}$, a coupling $\theta \in \CCC(\mu_{nk}, m_{nk})$ such that $\mathbf{E}[\bar{d}_{nk}, \theta]$ is sufficiently small. The construction of $\theta$ will be given in this section, and appropriate bounds on $\mathbf{E}[\bar{d}_{nk}, \theta]$ will be proved in \S\ref{sec:pf-dTV}.

Our arguments will use couplings of three measures, not just of two: given measures $\nu^1,\nu^2,\nu^3$ on a space $X$, we let $\CCC(\nu^1,\nu^2,\nu^3)$ denote the set of all $\zeta \in \MMM(X^3)$ with the property that $\pi^i_* \zeta = \nu^i$ for each $i\in \{1,2,3\}$, where $\pi^i \colon X^3 \to X$ denotes projection to the $i^{\mathrm{th}}$ coordinate.
We will also write $\pi^{12} \colon X^3 \to X^2$ for the projection onto the first two coordinates, and similarly for $\pi^{13}$ and $\pi^{23}$. Observe that if $\zeta \in \CCC(\nu^1,\nu^2,\nu^3)$, then $\pi^{ij}_* \zeta$ is a coupling of $\nu^i$ and $\nu^j$.

\subsubsection{The plan}

To produce the coupling $\theta$, we will also consider an auxiliary measure $\nu$ on $S^{nk}$ that is given (roughly speaking) as follows:
\begin{itemize}
\item if we partition $\{1,2,\dots, nk\}$ into $n$ blocks of $k$ consecutive indices, then each block is independent from the others w.r.t.\ $\nu$;
\item the symbols within a given block are distributed according to the transition probabilities from the matrix $T$ (which determines the Markov measure $m$), with the distribution at the start of the $j^{\mathrm{th}}$ block being given by the distribution of the $(j-1)k^{\mathrm{th}}$ symbol w.r.t.\ $\mu$ (the measure to be compared to $m$).
\end{itemize}
To construct $\nu$ precisely, we will start with optimal couplings $\Gamma_j^y \in \CCC(\mu_k^{y,jk}, m_k^{y,jk})$ on the $j^{\mathrm{th}}$ block of $k$ symbols, then take the product of these (as in \eqref{eqn:mun-cond}) to produce a measure $\gamma \in \MMM((S^{nk})^2)$, whose projection to the second coordinate will be $\nu$. This is the meaning of the right-hand edges of the triangles in Figure \ref{fig:couplings}, which also illustrates the following steps:
\begin{itemize}
\item given optimal couplings $Y_j^{x,y} \in \CCC(m_k^{x,jk}, m_k^{y,jk})$, we take the ``relative product'' of $Y_j^{x,y}$ and $\Gamma_j^y$ to obtain couplings $Z_j^{x,y} \in \CCC(m_k^{x,jk}, \mu_k^{y,jk}, m_k^{y,jk})$;
\item the product of these couplings over $j \in \{1,\dots, n\}$ gives $\zeta \in \CCC(m_{nk},\mu_{nk},\nu)$;
\item the candidate coupling $\theta \in \CCC(m_{nk},\mu_{nk})$
is the projection of $\zeta$ to the first two coordinates.
\end{itemize}
As Figure \ref{fig:couplings} suggests, throughout \S\ref{sec:candidate} we will consistently use the notation $(u,v,w)$ for an element of $(S^k)^3$, and $(x,y,z)$ for an element of $(S^{nk})^3$. (In \S\ref{sec:pf-dTV} and \S\ref{sec:back-to-d-bar}, we will use $(x,y,z)$ to denote an element of $(S^{jk})^3$ for some $0\leq j < n$.)

\begin{figure}[htbp]
\begin{tikzpicture}[scale=1,
    every node/.style={font=\small},
    vertex/.style={circle,fill=black,inner sep=1.8pt},
    innerlabel/.style={
        draw,
        fill=yellow!20,
        rectangle,
        rounded corners=1pt,
        inner sep=2.5pt
    }
]

\coordinate (A) at (0,0);
\coordinate (B) at (3,0);
\coordinate (C) at (1.5,{1.5*sqrt(3)});

\coordinate (D) at (8,0);
\coordinate (E) at (11,0);
\coordinate (F) at (9.5,{1.5*sqrt(3)});

\draw (A)--(B)--(C)--cycle;
\draw (D)--(E)--(F)--cycle;

\foreach \P in {A,B,C,D,E,F}
    \node[vertex] at (\P) {};

\node[left=3pt,yshift=2pt]  at (A) {$m_k^{x,jk}$};
\node[right=3pt,yshift=2pt] at (B) {$\mu_k^{y,jk}$};
\node[left=3pt]             at (C) {$m_k^{y,jk}$};

\node[innerlabel,below right=5pt] at (A) {$u$};
\node[innerlabel,below left=5pt]  at (B) {$v$};
\node[innerlabel,right=6pt]       at (C) {$w$};

\node[left=3pt,yshift=2pt]  at (D) {$m_{nk}$};
\node[right=3pt,yshift=2pt] at (E) {$\mu_{nk}$};
\node[left=3pt]             at (F) {$\nu$};

\node[innerlabel,below right=5pt] at (D) {$x$};
\node[innerlabel,below left=5pt]  at (E) {$y$};
\node[innerlabel,right=6pt]       at (F) {$z$};

\coordinate (Lcenter) at (1.5,{0.5*sqrt(3)});
\coordinate (Rcenter) at (9.5,{0.5*sqrt(3)});

\node[left=4pt]  at ($(A)!0.5!(C)$) {$Y_j^{x,y}$};
\node[right=4pt] at ($(B)!0.5!(C)$) {$\Gamma_j^y$};
\node at (Lcenter) {$Z_j^{x,y}$};

\node[below=4pt] at ($(D)!0.5!(E)$) {$\theta$};
\node[right=4pt] at ($(E)!0.5!(F)$) {$\gamma$};
\node at (Rcenter) {$\zeta$};

\draw[->,thick] (4.2,1.3) -- (6.8,1.3)
    node[midway,above] {$\Pi$ as in \eqref{eqn:mun-cond}};

\node[innerlabel, right=50pt] at (C) {$S^k$};
\node[innerlabel, left=50pt] at (F) {$S^{nk}$};
\end{tikzpicture}
\caption{Various couplings involved in the construction.}
\label{fig:couplings}
\end{figure}

\subsubsection{Coupling to an auxiliary measure} 

As described above, we first construct $\gamma$ via its conditional measures on $k$-blocks, and then obtain $\nu$ as the projection of $\gamma$.

Given $y\in S^{nk}$ and $0\leq j < n$, let $\Gamma_j^y$ be a probability measure on $S^k\times S^k$ that is the optimal coupling of the conditional measures $\mu_k^{y,jk}$ and $m_k^{y,jk}$ with respect to the discrete metric on $S^k$; in particular, we have
\begin{equation}\label{eqn:delta-Gamma}
\Gamma_j^y (\{ (v,w) \in S^k\times S^k : v\neq w \})
= \DTV(\mu_k^{y,jk}, m_k^{y,jk}).
\end{equation}
Given $z\in S^{nk}$ and $0\leq j < n$, let $z^{(j)} := z_{(jk, (j+1)k]}$.
Now define a measure $\gamma$ on $S^{nk} \times S^{nk}$ by the property that its conditional measures on each block of $k$ consecutive symbols are given by $\Gamma_j^y$: that is, 
\begin{equation}\label{eqn:gamma}
\gamma(y,z) := \prod_{j=0}^{n-1}
\Gamma_j^y(y^{(j)}, z^{(j)}).
\end{equation}
Since $\Gamma_j^y \in \CCC(\mu_k^{y,jk},m_k^{y,jk})$, we have $\pi^1_* \Gamma_j^y = \mu_k^{y,jk}$, and consequently, given any $y\in S^{nk}$, we have
\begin{align*}
\pi^1_* \gamma(y) &= \sum_{z\in S^{nk}} \gamma(y,z)
= \sum_{z\in S^{nk}} \prod_{j=0}^{n-1}
\Gamma_j^y(y^{(j)},z^{(j)}) \\
&= \prod_{j=0}^{n-1} \sum_{w \in S^k}
\Gamma_j^y(y^{(j)}, w)
= \prod_{j=0}^{n-1} \pi^1_* \Gamma_j^y(y^{(j)})
= \prod_{j=0}^{n-1} \mu_k^{y,jk}(y^{(j)})
= \mu_{nk}(y).
\end{align*}
Writing $\nu := \pi^2_* \gamma$, we see that $\gamma \in \CCC(\mu_{nk}, \nu)$.

\subsubsection{The grand coupling}

To define the coupling $\zeta \in \CCC(m_{nk}, \mu_{nk}, \nu)$, we once again use conditional measures corresponding to an optimal coupling. This time, given $x,y\in S^{nk}$ and $0\leq j < n$, we let $Y_j^{x,y}$ be the measure on $S^k \times S^k$ that is the optimal coupling of the conditional measures $m_k^{x,jk}$ and $m_k^{y,jk}$ with respect to the normalized Hamming metric $\dH_k$ on $S^k$; in particular, we have
\begin{equation}\label{eqn:Y-D}
\mathbf{E} [ \dH_k, Y_j^{x,y}]
= \sum_{u,w\in S^k} \dH_k(u,w) Y_j^{x,y}(u,w)
= \bar{d}_k( m_k^{x,jk}, m_k^{y,jk} ).
\end{equation}
This optimal coupling has the following property:
\begin{equation}\label{eqn:stay-coupled}
\text{if $\dH_k(u,w) < 1$ and $Y_j^{x,y}(u,w) > 0$, then $u_k = w_k$.}
\end{equation}
Roughly speaking, this is because if the coupling were to give positive weight to any pair $(v,w)$ with $\dH_k(v,w) < 1$ and $v_k \neq w_k$, then we could produce another coupling for which the sum in \eqref{eqn:Y-D} takes a strictly smaller value by moving the corresponding weight to pairs of words that remain identical after they first match.

\begin{lem}\label{lem:Zjxy}
For each $x,y\in S^{nk}$ and $j\in \mathbb{N}$, the measure $Z_j^{x,y}$ on $(S^k)^3$ defined by 
\[
Z_j^{x,y}(u,v,w) =
\frac{Y_j^{x,y}(u,w) \Gamma_j^y(v,w)}{m_k^{y,jk}(w)}
\]
has the property that $\pi^{13}_* Z_j^{x,y} = Y_j^{x,y}$ and $\pi^{23}_* Z_j^{x,y} = \Gamma_j^y$, where $\pi^{13}(u,v,w) = (u,w)$ and $\pi^{23}(u,v,w) = (v,w)$. Consequently,  $Z_j^{x,y} \in \CCC(m_k^{x,jk},\mu_k^{y,jk},m_k^{y,jk})$.
\end{lem}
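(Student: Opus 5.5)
This is the standard gluing (relative product) construction, so the plan is to compute the two marginals directly by summing out one coordinate. The key input is that $Y_j^{x,y}$ and $\Gamma_j^y$ share the same marginal on the third coordinate $w$. Indeed, $\pi^2_* Y_j^{x,y} = m_k^{y,jk}$ and $\pi^2_*\Gamma_j^y = m_k^{y,jk}$, both by the definition of these couplings. So each factor in the numerator, summed over its free variable, returns exactly the denominator.

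First I will handle the convention for the denominator. If $m_k^{y,jk}(w)=0$, then $Y_j^{x,y}(u,w)=0$ and $\Gamma_j^y(v,w)=0$ for all $u,v$, since each is bounded by its second marginal. In that case I define $Z_j^{x,y}(u,v,w):=0$. This agrees with every formula below and does not affect any marginal.

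Next I compute $\pi^{13}_*Z_j^{x,y}(u,w)$. Summing over $v\in S^k$ gives
\[
\frac{Y_j^{x,y}(u,w)}{m_k^{y,jk}(w)}\sum_{v}\Gamma_j^y(v,w)
= \frac{Y_j^{x,y}(u,w)}{m_k^{y,jk}(w)}\, m_k^{y,jk}(w)
= Y_j^{x,y}(u,w).
\]
Symmetrically, summing over $u$ uses $\sum_u Y_j^{x,y}(u,w)=m_k^{y,jk}(w)$ and gives $\pi^{23}_*Z_j^{x,y}=\Gamma_j^y$. Nonnegativity of $Z_j^{x,y}$ is clear from the formula, and its total mass is $1$ because its $\pi^{13}$-pushforward $Y_j^{x,y}$ is a probability measure.

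Finally, the single-coordinate marginals follow by composing projections:
\begin{align*}
\pi^1_*Z_j^{x,y} &= \pi^1_*Y_j^{x,y} = m_k^{x,jk}, \\
\pi^2_*Z_j^{x,y} &= \pi^1_*\Gamma_j^y = \mu_k^{y,jk}, \\
\pi^3_*Z_j^{x,y} &= m_k^{y,jk}.
\end{align*}
Together these give $Z_j^{x,y}\in\CCC(m_k^{x,jk},\mu_k^{y,jk},m_k^{y,jk})$. There is no real obstacle in this argument. The only point that needs care is the zero-denominator convention, which the first step settles.
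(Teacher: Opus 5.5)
Your proposal is correct and matches the paper's proof: the paper's entire argument is the observation that $\sum_v \Gamma_j^y(v,w) = m_k^{y,jk}(w) = \sum_u Y_j^{x,y}(u,w)$, which is exactly the computation you carry out. Your explicit convention $Z_j^{x,y}(u,v,w):=0$ when $m_k^{y,jk}(w)=0$ is a detail the paper leaves implicit, and you handle it correctly.
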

\begin{proof}
It suffices to observe that $\sum_v \Gamma_j^y(v,w) = m_k^{y,jk}(w) = \sum_u Y_j^{x,y}(u,w)$.
\end{proof}

As in the definition of $\gamma$ in \eqref{eqn:gamma}, we now define a measure $\zeta$ on $S^{nk} \times S^{nk} \times S^{nk}$ such that its conditional measures on blocks of length $k$ are given by $Z_j^{x,y}$:
\begin{equation}\label{eqn:zeta}
\zeta(x,y,z) := \prod_{j=0}^{n-1}
Z_j^{x,y}(x^{(j)},y^{(j)},z^{(j)}).
\end{equation}
Using Lemma \ref{lem:Zjxy}, the same argument as in the paragraph following \eqref{eqn:gamma} shows that $\zeta \in \CCC(m_{nk}, \mu_{nk},\nu)$, and thus the measure $\theta := \pi^{12}_* \zeta$ on $S^{nk} \times S^{nk}$, which is given by
\begin{equation}\label{eqn:theta}
\theta(x,y) = \sum_{z\in S^{nk}} \zeta(x,y,z),
\end{equation}
lies in $\CCC(m_{nk}, \mu_{nk})$.

\subsection{Proof of Proposition \ref{prop:dTV}}\label{sec:pf-dTV}

Since the measure $\theta$ on $(S^{nk})^2$ defined in \eqref{eqn:theta} is a coupling of $m_{nk}$ and $\mu_{nk}$, we have
\begin{equation}\label{eqn:Delta-def}
\bar{d}_{nk}(m_{nk},\mu_{nk}) \leq
\sum_{(x,y)\in (S^{nk})^2} \dH_{nk}(x,y) \theta(x,y)
= \mathbf{E}[\dH_{nk}, \theta]
=: \Delta.
\end{equation}
Thus in order to prove Proposition \ref{prop:dTV}, it suffices to produce an appropriate upper bound for $\Delta$. In the estimates that follow, we will use the following notation: 
we will use the symbols $\full{x}, \full{y}, \full{z}$ to represent words in $S^{nk}$, and will carry out computations involving some fixed $j$, writing
\[
\jm{x}, \jm{y}, \jm{z} 
 \in S^{(j-1)k},
\qquad
\jj{x}, \jj{y}, \jj{z} 
\in S^{jk},
\qquad
\jp{x}, \jp{y}, \jp{z} 
\in S^{(j+1)k}.
\]
We will also use the following notation, which is analogous to \eqref{eqn:mu-k}:
\[
\zeta_{jk}(\jj{x},\jj{y},\jj{z})
:= \zeta( \{ (\full{x},\full{y},\full{z}) \in (S^{nk})^3 : \full{x}_{[1,jk]} = \jj{x},\ \full{y}_{[1,jk]} = \jj{y}, \text{ and } \full{z}_{[1,jk]} = \jj{z} \} ),
\]
and similarly for $\zeta_{(j+1)k}(\jp{x},\jp{y},\jp{z})$ and $\zeta_{(j-1)k}(\jm{x},\jm{y},\jm{z})$.
Now we use the fact that
\[
\dH_{nk}(\full{x},\full{y}) = \frac 1n \sum_{j=0}^{n-1} \dH_k(\full{x}^{(j)}, \full{y}^{(j)}),
\]
along with the fact that $\full{x}^{(j)} = \jp{x}^{(j)}$
and $\full{y}^{(j)} = \jp{y}^{(j)}$, to write
\begin{equation}\label{eqn:Delta}
\begin{aligned}
\Delta
&=  \frac 1n \sum_{j=0}^{n-1} \sum_{(\full{x},\full{y},\full{z}) \in (S^{nk})^3} \dH_k(\full{x}^{(j)}, \full{y}^{(j)}) \zeta(\full{x},\full{y},\full{z}) \\
&= \frac 1n \sum_{j=0}^{n-1} \sum_{(\jp{x},\jp{y},\jp{z}) \in (S^{(j+1)k})^3}
\dH_k(\jp{x}^{(j)}, \jp{y}^{(j)})
\zeta_{(j+1)k}(\jp{x},\jp{y},\jp{z}). 
\end{aligned}
\end{equation}
Observe that we can write $\jp{x} = \jj{x} u$, where $u = \jp{x}^{(j)}$, and similarly for $\jp{y}$ and $\jp{z}$. Given $(u,v,w) \in (S^k)^3$, we have
\[
\zeta_{(j+1)k}(\jj{x} u, \jj{y} v, \jj{z} w)
= \zeta_{jk}(\jj{x}, \jj{y}, \jj{z}) Z_j^{\jj{x},\jj{y}}(u,v,w),
\]
so writing $r(u,v,w) := \dH_k(u,v)$, \eqref{eqn:Delta} gives
\begin{equation}\label{eqn:Delta-again}
\begin{aligned}
\Delta &= \frac 1n \sum_{j=0}^{n-1}
\sum_{(\jj{x},\jj{y},\jj{z}) \in (S^{jk})^3}
\sum_{(u,v,w) \in (S^k)^3}
\dH_k(u,v) \zeta_{jk}(\jj{x}, \jj{y}, \jj{z}) Z_j^{\jj{x},\jj{y}}(u,v,w) \\
&= \frac 1n \sum_{j=0}^{n-1}
\sum_{(\jj{x},\jj{y},\jj{z}) \in (S^{jk})^3}
\EE[ r, Z_j^{\jj{x},\jj{y}} ] \zeta_{jk} (\jj{x},\jj{y},\jj{z}).
\end{aligned}
\end{equation}
Now consider the functions $f,g \colon (S^k)^3 \to [0,1]$ defined by
\[
f(u,v,w) =
\begin{cases} 1 &\text{if } v\neq w \\ 0 &\text{if } v=w \end{cases}
\qquad\text{and}\qquad
g(u,v,w) =
\begin{cases} 0 &\text{if } v\neq w \\ \dH_k(u,w) &\text{if } v=w \end{cases}
\]
and observe that $r \leq f + g$, so that
\begin{equation}\label{eqn:EEE}
\mathbf{E}[r, Z_j^{\jj{x},\jj{y}}]
\leq \mathbf{E}[f, Z_j^{\jj{x},\jj{y}}] + \mathbf{E}[g, Z_j^{\jj{x},\jj{y}}].
\end{equation}
Recalling \eqref{eqn:delta-Gamma} and using the fact from Lemma \ref{lem:Zjxy} that $\pi^{23}_* Z_j^{\jj{x},\jj{y}} = \Gamma_j^{\jj{y}}$, we see that
\begin{equation}\label{eqn:Ef}
\mathbf{E}[f, Z_j^{\jj{x},\jj{y}}] = Z_j^{\jj{x},\jj{y}}( \{ (u,v,w) \in (S^k)^3 : v\neq w \})
= \DTV(\mu_k^{\jj{y},jk},m_k^{\jj{y},jk}).
\end{equation}
For the last term in \eqref{eqn:EEE}, we first note that
since $g(u,v,w) \leq \dH_k(u,w)$, we have
\begin{equation}\label{eqn:Eg}
\mathbf{E}[g, Z_j^{\jj{x},\jj{y}}]
\leq \sum_{(u,v,w) \in (S^k)^3} \dH_k(u,w) Z_j^{\jj{x},\jj{y}}(u,v,w)
= \mathbf{E}[\dH_k, Y_j^{\jj{x},\jj{y}}].
\end{equation}
so that \eqref{eqn:EEE} gives
\begin{equation}\label{eqn:EDE}
\mathbf{E}[r, Z_j^{\jj{x},\jj{y}}]
\leq \DTV(\mu_k^{\jj{y},jk},m_k^{\jj{y},jk})
+ \mathbf{E}[\dH_k, Y_j^{\jj{x},\jj{y}}].
\end{equation}
Combining \eqref{eqn:Delta} and \eqref{eqn:EDE}, we have
\begin{equation}\label{eqn:Delta-1}
\Delta \leq \frac 1n \sum_{j=0}^{n-1} \sum_{(\jj{x},\jj{y},\jj{z}) \in (S^{jk})^3}
\zeta(\jj{x},\jj{y},\jj{z}) 
\big( \DTV(\mu_k^{\jj{y},jk},m_k^{\jj{y},jk})
+ \mathbf{E}[\dH_k, Y_j^{\jj{x},\jj{y}}] \big).
\end{equation}
Given $0\leq j < n$, let
\begin{align*}
Q_j &:=
\sum_{(\jj{x},\jj{y},\jj{z}) \in (S^{jk})^3}
\zeta(\jj{x},\jj{y},\jj{z}) 
\DTV(\mu_k^{\jj{y},jk},m_k^{\jj{y},jk}), \\
R_j &:=
\sum_{(\jj{x},\jj{y},\jj{z}) \in (S^{jk})^3}
\zeta(\jj{x},\jj{y},\jj{z}) 
\mathbf{E}[\dH_k, Y_j^{\jj{x},\jj{y}}],
\end{align*}
so that \eqref{eqn:Delta-1} can be written as
\begin{equation}\label{eqn:Delta-2}
\Delta \leq 
Q + R,
\quad\text{where}\quad
Q := \frac 1n \sum_{j=0}^{n-1} Q_j
\quad\text{and}\quad
R := \frac 1n \sum_{j=0}^{n-1} R_j.
\end{equation}
We will obtain a bound on $R$ in terms of $Q$ by relating $R_j$ to $Q_{j-1}$ and $R_{j-1}$. The following lemma is our crucial estimate; it relies on properties of the coupling $Y_j^{\jj{x},\jj{y}}$ that used the Markov property of $m$, and also relies on the mixing property of $m$.

\begin{lem}\label{lem:Eleq}
For all $(\jm{x},\jm{y},\jm{z}) \in (S^{(j-1)k})^3$ and $(u,v,w) \in (S^k)^3$, we have
\begin{equation}\label{eqn:Efg}
\mathbf{E}[\dH_k, Y_j^{\jm{x} u, \jm{y} v} ]
\leq f(u,v,w) + (1-\lambda) g(u,v,w),
\end{equation}
where $\lambda = q/k$, with $q>0$ and $k\in \mathbb{N}$ as in the statement of Theorem \ref{thm: d_bar distance and KL-divergence}.
\end{lem}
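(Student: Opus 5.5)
The plan is to split according to the two functions $f$ and $g$ on the right-hand side of \eqref{eqn:Efg}, and then use the Markov property of $m$ to reduce the left-hand side to a quantity depending only on the last symbols $u_k$ and $v_k$. Since $\dH_k\le 1$ everywhere, we always have $\mathbf{E}[\dH_k, Y_j^{\jm{x}u,\jm{y}v}]\le 1$. Hence if $v\ne w$, so that $f(u,v,w)=1$ and $g(u,v,w)=0$, the inequality is immediate. It remains to treat $v=w$, where the right-hand side equals $(1-\lambda)\dH_k(u,v)$.

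For $v=w$ I would first record the Markov reduction. Since $m$ is the Markov measure with transition matrix $T$, the conditional measure is
\[
m_k^{\jm{x}u,jk}(s)=T_{u_k s_1}T_{s_1s_2}\cdots T_{s_{k-1}s_k}.
\]
This depends only on $u_k$, and likewise $m_k^{\jm{y}v,jk}$ depends only on $v_k$. So by \eqref{eqn:Y-D} the left-hand side equals $\bar d_k(m^{(u_k)}_k, m^{(v_k)}_k)$, where $m^{(a)}_k$ denotes the law of the first $k$ steps of the chain started at $a$. If $u_k=v_k$, these two measures coincide, so $\bar d_k=0$ and the bound holds trivially.

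If $u_k\ne v_k$, I would build an explicit coupling of $m^{(a)}_k$ and $m^{(b)}_k$, with $a=u_k$ and $b=v_k$, in three stages. First, choose the time-$k$ symbols $(s_k,t_k)$ from a maximal coupling of $T^k(a,\cdot)$ and $T^k(b,\cdot)$. Since $(T^k)_{ac},(T^k)_{bc}\ge q$ for every $c$, these symbols agree with probability at least $\sum_c\min\{(T^k)_{ac},(T^k)_{bc}\}\ge q$. Second, fill in the intermediate symbols $s_1\cdots s_{k-1}$ and $t_1\cdots t_{k-1}$ independently from the Markov bridges given the endpoints. This preserves the correct marginals. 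Third, bound the cost: the $k$-th coordinates disagree with probability at most $1-q$, and the other $k-1$ coordinates contribute at most $(k-1)/k$. This gives
\[
\bar d_k\le \frac{k-1}{k}+\frac{1-q}{k}=1-\frac qk=1-\lambda .
\]
This is exactly the bound \eqref{eqn:Efg} whenever $\dH_k(u,v)=1$.

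The main obstacle is the remaining subcase: $v=w$ and $u_k\ne v_k$, but $\dH_k(u,v)<1$. Here the left-hand side depends only on $(u_k,v_k)$, while the right-hand side $(1-\lambda)\dH_k(u,v)$ can be as small as $(1-\lambda)/k$. So the argument must exclude this configuration rather than estimate it. My first attempt would be to extract it from the structure of the couplings: the defining property \eqref{eqn:stay-coupled} of the optimal couplings $Y$ forces $u_k=w_k=v_k$ whenever $\dH_k(u,w)<1$ at the relevant stage. I would check carefully how the lemma is meant to be read. Namely, $Y_{j-1}$ is determined by $(\jm{x},\jm{y})$, and I would check whether the combination $g(u,v,w)=\dH_k(u,w)$ with $v=w$ together with the Hamming-optimality of $Y$ excludes $\dH_k(u,v)<1$ with $u_k\ne v_k$. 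Only if this works does the case analysis above cover every triple.
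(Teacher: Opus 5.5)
Your case split is the paper's. Your explicit coupling for $u_k\neq v_k$ (a maximal coupling of the rows $(T^k)_{a\cdot}$ and $(T^k)_{b\cdot}$ at time $k$, with independent bridges in between) correctly supplies the bound $1-q/k$, which the paper only asserts. The gap is the subcase you leave open: $v=w$, $u_k\neq w_k$, $\dH_k(u,w)<1$. It cannot be closed for arbitrary triples, because \eqref{eqn:Efg} is false there in general.

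For example, take $S=\{0,1\}$, $T_{00}=T_{11}=\epsilon$, $T_{01}=T_{10}=1-\epsilon$, $k=2$, $q=2\epsilon(1-\epsilon)$, and $u=00$, $v=w=01$. The right-hand side of \eqref{eqn:Efg} is $(1-\lambda)/2<1/2$. The left-hand side is $\bar{d}_2$ between the laws of $(X_1,X_2)$ for the chain started at $0$ and at $1$. It is at least the average of the coordinatewise total variation distances, $\frac12\big((1-2\epsilon)+(1-2\epsilon)^2\big)$, which is close to $1$ for small $\epsilon$. So your case analysis can never ``cover every triple''.

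What is missing is the observation that it does not need to. The lemma is only ever applied inside $\mathbf{E}[\,\cdot\,,Z_{j-1}^{\jm{x},\jm{y}}]$. It therefore suffices to prove \eqref{eqn:Efg} for triples with $Z_{j-1}^{\jm{x},\jm{y}}(u,v,w)>0$, and by the formula in Lemma \ref{lem:Zjxy} such triples satisfy $Y_{j-1}^{\jm{x},\jm{y}}(u,w)>0$. Now apply \eqref{eqn:stay-coupled} to $Y_{j-1}^{\jm{x},\jm{y}}$, the coupling of the blocks $u$ and $w$ (not to $Y_j$). It gives $\dH_k(u,w)=1$ whenever $u_k\neq w_k$, so $g(u,v,w)=1$, and your bound $\bar{d}_k\le 1-\lambda$ finishes the case. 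This is exactly how the paper's proof handles its third case; it simply leaves the restriction to the support implicit.

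One caution: \eqref{eqn:stay-coupled} is not a property of every optimal coupling, since ties can occur. It holds for a suitably chosen one, e.g.\ one that keeps the pointwise minimum of the two conditional measures on the diagonal and recurses on the excesses, whose first symbols have disjoint supports. So you should not call it a defining property of the optimal couplings.
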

\begin{proof}
There are three cases to consider.
\begin{itemize}
\item If $v \neq w$, then $f(u, v, w) = 1$, and the result follows since $\dH_k \leq 1$.
\item If $v = w$ and $u_k = w_k$, then
$Y_j^{\jm{x} u, \jm{y} v}$ is an optimal coupling of a measure with itself, and thus is supported on the diagonal in $(S^k)^2$, so
$\mathbf{E}[\dH_k, Y_j^{\jm{x} u, \jm{y} v} ] = 0$, and the result follows.
\item If $v = w$ and $u_k \neq w_k$, then by \eqref{eqn:stay-coupled}, we have $\dH_k(u, w) = 1$, so $g(u, v, w) = 1$, and the right-hand side of \eqref{eqn:Efg} is equal to $1-\frac q k$.
Since the transition matrix $T$ satisfies $(T^k)_{ab} \geq q$ for all $a,b\in S$, it follows that $\mathbf{E}[\dH_k, Y_j^{\jm{x} u, \jm{y} v} ] \leq 1-\frac q k$, and we are done.\qedhere
\end{itemize}
\end{proof}

Reindexing $\sum_{(\jj{x},\jj{y},\jj{z})}$ as $\sum_{(\jm{x},\jm{y},\jm{z})} \sum_{(u,v,w)}$, with the latter sums taken over $(S^{(j-1)k})^3$ and $(S^k)^3$, 
we can use Lemma \ref{lem:Eleq} together with \eqref{eqn:Ef} and \eqref{eqn:Eg} to get
\begin{multline*}
\sum_{(\jj{x},\jj{y},\jj{z}) \in (S^{jk})^3} \zeta(\jj{x},\jj{y},\jj{z}) \mathbf{E}[\dH_k, Y_j^{\jj{x},\jj{y}}]
\leq
\sum_{(\jm{x},\jm{y},\jm{z})\in (S^{(j-1)k})^3}
\zeta(\jm{x},\jm{y},\jm{z})
\mathbf{E}\big[
f + (1-\lambda)g,
Z_{j-1}^{\jm{x},\jm{y},\jm{z}}\big] \\
\leq
\sum_{(\jm{x},\jm{y},\jm{z})\in (S^{(j-1)k})^3}
\zeta(\jm{x},\jm{y},\jm{z})\Big(
\DTV(\mu_k^{\jm{y},(j-1)k},m_k^{\jm{y},(j-1)k})
+ (1-\lambda) 
\mathbf{E}[\dH_k, Y_{j-1}^{\jm{x},\jm{y}}] \Big).
\end{multline*}
Using the $Q_j, R_j$ notation and summing over $j$, this gives
\[
R_j \leq Q_{j-1} + (1-\lambda)R_{j-1}
\quad\Rightarrow\quad
R \leq Q + (1-\lambda)R,
\]
from which we deduce that $0 \leq Q - \lambda R$, so $R \leq \lambda^{-1} Q$, which we use in \eqref{eqn:Delta-2} to obtain
\[
\Delta \leq Q + \lambda^{-1} Q
= (1 + \lambda^{-1}) \frac 1n \sum_{j=0}^{n-1} Q_j.
\]
With $K := 1+\lambda^{-1} = 1 + \frac kq$, we can rewrite this as
\begin{equation}\label{eqn:Delta-last}
\Delta \leq \frac Kn \sum_{j=0}^{n-1}
\sum_{(\jj{x},\jj{y},\jj{z}) \in (S^{jk})^3}
\zeta(\jj{x},\jj{y},\jj{z}) 
\DTV(\mu_k^{y,jk},m_k^{y,jk}).
\end{equation}
Since $\pi^2_* \zeta = \mu_{nk}$, 
we have $\sum_{(\jj{x},\jj{y},\jj{z}) \in (S^{jk})^3} \zeta(x,y,z) = \sum_{\jj{y}\in S^{jk}}\mu_{jk}(y)$, so \eqref{eqn:Delta-last} implies \eqref{eqn:dTV} and completes the proof of Proposition \ref{prop:dTV}.

\subsection{From total variation to a $\bar{d}$-estimate}\label{sec:back-to-d-bar}

Combining Proposition \ref{prop:dTV} and Lemma \ref{lem:Pinsker}, and using concavity of the square root function, we get
\begin{align*}
\bar{d}_{nk}(m_{nk},\mu_{nk})
&\leq \frac Kn \sum_{j=0}^{n-1} \sum_{\jj{y}\in S^{jk}} \mu_{jk}(\jj{y})
\sqrt{\frac 12 D(\mu_k^{\jj{y},jk} \| m_k^{\jj{y},jk})} \\
&\leq \frac K{\sqrt{2}}
\bigg( \frac 1n \sum_{j=0}^{n-1} \sum_{\jj{y}\in S^{jk}} \mu_{jk}(y) D(\mu_k^{\jj{y},jk} \| m_k^{\jj{y},jk}) \bigg)^{1/2}.
\end{align*}
Defining $f_j \colon S^{jk} \to \RR$ by $f_j(\jj{y}) = D(\mu_k^{\jj{y},jk} \| m_k^{\jj{y},jk})$, we can write this as
\begin{equation}\label{eqn:av-E}
\bar{d}_{nk}(m_{nk},\mu_{nk})
\leq \frac K{\sqrt{2}} \Big( \frac 1n \sum_{j=0}^{n-1} \EE[ f_j, \mu_{jk} ] \Big)^{1/2}.
\end{equation}
For every $j$, every $\jj{y}\in S^{jk}$, and every $w\in S^k$, we have
\[
\mu_{(j+1)k}(\jj{y}w) = \mu_{jk}(y) \mu_k^{\jj{y},jk}(w).
\]
Using this, we see that
\[
\log \Big( \frac{\mu_k^{\jj{y},jk}(w)}{m_k^{\jj{y},jk}(w)} \Big)
= \log \Big( \frac{\mu_{(j+1)k}(\jj{y} w)}{m_{(j+1)k}(\jj{y} w)} \Big)
- \log \Big( \frac{\mu_{jk}(\jj{y})}{m_{jk}(\jj{y})} \Big),
\]
and consequently,
\begin{align*}
\mu_{jk}(\jj{y}) f_j(\jj{y})
&= \sum_{w\in S^k} \mu_{jk}(\jj{y})
\mu_k^{\jj{y},jk}(w) \log \Big(
\frac{\mu_k^{\jj{y},jk}(w)}{m_k^{\jj{y},jk}(w)} \Big)\\
&= \left(\sum_{w\in S^k} \mu_{(j+1)k}(yw)
\log \Big( \frac{\mu_{(j+1)k}(\jj{y} w)}{m_{(j+1)k}(\jj{y} w)} \Big)
\right)
-  \mu_{jk}(\jj{y}) \log \Big( \frac{\mu_{jk}(\jj{y})}{m_{jk}(\jj{y})} \Big).
\end{align*}
Summing over $\jj{y} \in S^{jk}$, we see that $\sum_{\jj{y}} \sum_w$ can be replaced by $\sum_{\jp{y} \in S^{(j+1)k}}$, and thus
\[
\EE[f_j, \mu_{jk}]
= D(\mu_{(j+1)k} \| m_{(j+1)k}) - D(\mu_{jk} \| m_{jk}).
\]
Using this in \eqref{eqn:av-E}, the sum becomes telescoping, and since $\mu_{jk} = m_{jk}$ when $j=0$, we obtain
\[
\sum_{j=0}^{n-1} \EE[f_j, \mu_{jk}] = D(\mu_{nk} \| m_{nk}).
\]
Thus \eqref{eqn:av-E} gives
\begin{equation}\label{eqn:Ksqrt2}
\bar{d}_{nk}(m_{nk},\mu_{nk})
\leq \frac{K}{\sqrt{2}} \Big( \frac 1n D(\mu_{nk} \| m_{nk}) \Big)^{1/2}.
\end{equation}
Since $k\geq 1 \geq q > 0$, we have $K = 1 + \frac kq \leq \frac{2k}q$, so $\frac{K}{\sqrt{2}} \leq \frac kq \sqrt{2}$, and thus \eqref{eqn:Ksqrt2} implies \eqref{eqn:Marton}, completing the proof of Theorem \ref{thm: d_bar distance and KL-divergence}.

\section{Estimates on the pressure function}\label{section: Pressure}

Now we use Theorems \ref{thm: Main Theorem} and \ref{thm: dual form of d_bar} to prove Theorem \ref{thm: pressure estimates}.
Fix $\phi \in \OmH$ and $\mu\in \MMM_\sigma(\Sigma)$.
For any $a\in \mathbb{R}$, consider the set of measures
\[
M^\phi_a := \Big\{ \mu \in \MMM_\sigma(\Sigma) : \int_\Sigma \phi \,d\mu = a \Big\},
\]
and let $I_\phi := \{ a\in \RR : M_a^\phi \neq \emptyset \}$.
Given $a\in I_\phi$, let
\[
H(a) := \sup \{ h(\mu) : \mu \in M^\phi_a \}.
\] 
Let $\bar{a} := \int \phi \,dm$, where $m$ is the unique MME.
Observe that $H(\bar{a}) = h(m) = \htop(\Sigma)$.
For each $t\in \mathbb{R}$, the variational principle for topological pressure gives
\begin{equation}\label{eqn:Pt}
\begin{aligned}
P(t\phi)&= \sup \left\{ h(\mu)+t\int \phi\ d\mu : \mu \in \MMM_\sigma(\Sigma)\right\}\\
&= \sup_{a\in I_\phi} \sup \{ h(\mu) + ta : \mu \in M_a^\phi \}
= \sup_{a\in I_\phi}\left\{H(a)+ ta \right\}.
\end{aligned}
\end{equation}
Given $a\in I_\phi$ and $\mu \in M^\phi_a$, we have
\[
|a - \bar{a}| = |\mu(\phi) - m(\phi)|
\leq \bar{d}(\mu,m)
\leq C\sqrt{\htop(\Sigma) - h(\mu)},
\]
where the first inequality uses Theorem \ref{thm: dual form of d_bar}, and the second uses Theorem \ref{thm: Main Theorem}. Squaring both sides gives
\[
(a-\bar{a})^2 \leq C^2\big(\htop(\Sigma) - h(\mu)\big),
\]
from which we deduce that
\[
h(\mu) \leq \htop(\Sigma) - C^{-2}(a-\bar{a})^2.
\]
Taking a supremum over all $\mu \in M^\phi_a$ gives
\[
H(a) \leq \htop(\Sigma) - C^{-2}(a-\bar{a})^2,
\]
and combining this with \eqref{eqn:Pt}, we get
\[
P(t\phi) \leq \sup_{a\in I_\phi} \big( \htop(\Sigma) - C^{-2}(a-\bar{a})^2 + ta \big).
\]
Writing $s := a - \bar{a} = a - m(\phi)$, this yields
\[
P(t\phi) \leq \sup_{s\in \RR} \big( \htop(\Sigma) - C^{-2} s^2 + ts + tm(\phi) \big).
\]
The quantity inside the supremum takes its maximum value when $s = \frac 12 tC^2$, so
\[
P(t\phi) \leq \htop(\Sigma) - \frac 1{C^{2}}  \frac 14 t^2 C^4
+ \frac 12 t^2 C^2 + tm(\phi)
= \htop(\Sigma) + m(\phi) t + \frac 14C^2 t^2,
\]
which proves \eqref{eqn:P-leq} with $M = \frac 14 C^2$. This completes the proof of Theorem \ref{thm: pressure estimates}.

\bibliographystyle{alpha}
\bibliography{references}

@book {walters2000introduction,
    AUTHOR = {Walters, Peter},
     TITLE = {An introduction to ergodic theory},
    SERIES = {Graduate Texts in Mathematics},
    VOLUME = {79},
 PUBLISHER = {Springer-Verlag, New York-Berlin},
      YEAR = {1982},
     PAGES = {ix+250},
      ISBN = {0-387-90599-5},
   MRCLASS = {28Dxx (54H20 58F11)},
  MRNUMBER = {648108},
MRREVIEWER = {M.\ A.\ Akcoglu},
}

@book {Villani2008OptimalTO,
    AUTHOR = {Villani, C\'edric},
     TITLE = {Optimal transport: Old and new},
    SERIES = {Grundlehren der mathematischen Wissenschaften [Fundamental
              Principles of Mathematical Sciences]},
    VOLUME = {338},
 PUBLISHER = {Springer-Verlag, Berlin},
      YEAR = {2009},
     PAGES = {xxii+973},
      ISBN = {978-3-540-71049-3},
   MRCLASS = {49-02 (28A75 37J50 49Q20 53C23 58E30)},
  MRNUMBER = {2459454},
MRREVIEWER = {Dario\ Cordero-Erausquin},
       DOI = {10.1007/978-3-540-71050-9},
       URL = {https://doi.org/10.1007/978-3-540-71050-9},
}

@book{glasner2003ergodic,
    AUTHOR = {Glasner, Eli},
     TITLE = {Ergodic theory via joinings},
    SERIES = {Mathematical Surveys and Monographs},
    VOLUME = {101},
 PUBLISHER = {American Mathematical Society, Providence, RI},
      YEAR = {2003},
     PAGES = {xii+384},
      ISBN = {0-8218-3372-3},
   MRCLASS = {37A15 (28Dxx 37A25 37A35 37A45 37B99 54H20)},
  MRNUMBER = {1958753},
MRREVIEWER = {Andr\'es\ del\ Junco},
       DOI = {10.1090/surv/101},
       URL = {https://doi.org/10.1090/surv/101},
}

@article{coelho1998criteria,
    AUTHOR = {Coelho, Zaqueu and Quas, Anthony N.},
     TITLE = {Criteria for {$\overline d$}-continuity},
   JOURNAL = {Trans. Amer. Math. Soc.},
  FJOURNAL = {Transactions of the American Mathematical Society},
    VOLUME = {350},
      YEAR = {1998},
    NUMBER = {8},
     PAGES = {3257--3268},
      ISSN = {0002-9947,1088-6850},
   MRCLASS = {28D05},
  MRNUMBER = {1422894},
MRREVIEWER = {Mariusz\ Lema\'nczyk},
       DOI = {10.1090/S0002-9947-98-01923-0},
       URL = {https://doi.org/10.1090/S0002-9947-98-01923-0},
}

@article{bhullar2026continuity,
    AUTHOR = {Bhullar, Jasmine},
     TITLE = {{$\overline {d}$}-continuity for countable state shifts},
   JOURNAL = {Ergodic Theory Dynam. Systems},
  FJOURNAL = {Ergodic Theory and Dynamical Systems},
    VOLUME = {46},
      YEAR = {2026},
    NUMBER = {2},
     PAGES = {466--489},
      ISSN = {0143-3857,1469-4417},
   MRCLASS = {37D35 (37A30 37B10)},
  MRNUMBER = {5012852},
MRREVIEWER = {Anthony\ Quas},
       DOI = {10.1017/etds.2025.10241},
       URL = {https://doi.org/10.1017/etds.2025.10241},
}

@article {rR21,
    AUTHOR = {R{\"u}hr, Ren\'e},
     TITLE = {Pressure inequalities for {G}ibbs measures of countable
              {M}arkov shifts},
   JOURNAL = {Dyn. Syst.},
  FJOURNAL = {Dynamical Systems. An International Journal},
    VOLUME = {36},
      YEAR = {2021},
    NUMBER = {2},
     PAGES = {332--339},
      ISSN = {1468-9367,1468-9375},
   MRCLASS = {37D35},
  MRNUMBER = {4265451},
MRREVIEWER = {Cao\ Zhao},
       DOI = {10.1080/14689367.2021.1905777},
       URL = {https://doi-org.ezproxy.lib.uh.edu/10.1080/14689367.2021.1905777},
}

@article {rR16,
    AUTHOR = {R{\"u}hr, Rene},
     TITLE = {Effectivity of uniqueness of the maximal entropy measure on
              {$p$}-adic homogeneous spaces},
   JOURNAL = {Ergodic Theory Dynam. Systems},
  FJOURNAL = {Ergodic Theory and Dynamical Systems},
    VOLUME = {36},
      YEAR = {2016},
    NUMBER = {6},
     PAGES = {1972--1988},
      ISSN = {0143-3857,1469-4417},
   MRCLASS = {37P55 (22E35 37A35)},
  MRNUMBER = {3530473},
MRREVIEWER = {Thomas\ Ward},
       DOI = {10.1017/etds.2014.148},
       URL = {https://doi-org.ezproxy.lib.uh.edu/10.1017/etds.2014.148},
}

@article {iK17,
    AUTHOR = {Khayutin, Ilya},
     TITLE = {Large deviations and effective equidistribution},
   JOURNAL = {Int. Math. Res. Not. IMRN},
  FJOURNAL = {International Mathematics Research Notices. IMRN},
      YEAR = {2017},
    NUMBER = {10},
     PAGES = {3050--3106},
      ISSN = {1073-7928,1687-0247},
   MRCLASS = {37P55 (37A30 37A50)},
  MRNUMBER = {3658132},
MRREVIEWER = {Thomas\ Ward},
       DOI = {10.1093/imrn/rnw099},
       URL = {https://doi-org.ezproxy.lib.uh.edu/10.1093/imrn/rnw099},
}

@article{ruhr2022effective,
    AUTHOR = {R\"uhr, Ren\'e{} and Sarig, Omri},
     TITLE = {Effective intrinsic ergodicity for countable state {M}arkov
              shifts},
   JOURNAL = {Israel J. Math.},
  FJOURNAL = {Israel Journal of Mathematics},
    VOLUME = {251},
      YEAR = {2022},
    NUMBER = {2},
     PAGES = {679--735},
      ISSN = {0021-2172,1565-8511},
   MRCLASS = {37B10},
  MRNUMBER = {4527555},
MRREVIEWER = {Anthony\ Quas},
       DOI = {10.1007/s11856-022-2436-x},
       URL = {https://doi.org/10.1007/s11856-022-2436-x},
}

@article {sK17,
    AUTHOR = {Kadyrov, Shirali},
     TITLE = {Effective equidistribution of periodic orbits for subshifts of
              finite type},
   JOURNAL = {Colloq. Math.},
  FJOURNAL = {Colloquium Mathematicum},
    VOLUME = {149},
      YEAR = {2017},
    NUMBER = {1},
     PAGES = {93--101},
      ISSN = {0010-1354,1730-6302},
   MRCLASS = {37A35 (28D20 37D05 37D20)},
  MRNUMBER = {3684406},
MRREVIEWER = {T\'ulio\ O.\ Carvalho},
       DOI = {10.4064/cm6653-9-2016},
       URL = {https://doi-org.ezproxy.lib.uh.edu/10.4064/cm6653-9-2016},
}

@article{kadyrov2015effective,
    AUTHOR = {Kadyrov, Shirali},
     TITLE = {Effective uniqueness of {P}arry measure and exceptional sets
              in ergodic theory},
   JOURNAL = {Monatsh. Math.},
  FJOURNAL = {Monatshefte f\"ur Mathematik},
    VOLUME = {178},
      YEAR = {2015},
    NUMBER = {2},
     PAGES = {237--249},
      ISSN = {0026-9255,1436-5081},
   MRCLASS = {37A35 (28D20 37C45)},
  MRNUMBER = {3394424},
MRREVIEWER = {Yun\ Zhao},
       DOI = {10.1007/s00605-014-0690-7},
       URL = {https://doi.org/10.1007/s00605-014-0690-7},
}

@article{parry1964intrinsic,
    AUTHOR = {Parry, William},
     TITLE = {Intrinsic {M}arkov chains},
   JOURNAL = {Trans. Amer. Math. Soc.},
  FJOURNAL = {Transactions of the American Mathematical Society},
    VOLUME = {112},
      YEAR = {1964},
     PAGES = {55--66},
      ISSN = {0002-9947,1088-6850},
   MRCLASS = {60.65},
  MRNUMBER = {161372},
MRREVIEWER = {H.\ P.\ Edmundson},
       DOI = {10.2307/1994009},
       URL = {https://doi.org/10.2307/1994009},
}

@phdthesis{polo2011equidistribution,
    AUTHOR = {Polo, Fabrizio},
     TITLE = {Equidistribution in chaotic dynamical systems},
      NOTE = {Thesis (Ph.D.)--The Ohio State University},
 PUBLISHER = {ProQuest LLC, Ann Arbor, MI},
      YEAR = {2011},
     PAGES = {111},
      ISBN = {978-1124-91043-7},
   MRCLASS = {99-05},
  MRNUMBER = {2942253},
       URL =
              {http://gateway.proquest.com/openurl?url_ver=Z39.88-2004&rft_val_fmt=info:ofi/fmt:kev:mtx:dissertation&res_dat=xri:pqdiss&rft_dat=xri:pqdiss:3477026},
}

@article{marton1998measure,
    AUTHOR = {Marton, Katalin},
     TITLE = {Measure concentration for a class of random processes},
   JOURNAL = {Probab. Theory Related Fields},
  FJOURNAL = {Probability Theory and Related Fields},
    VOLUME = {110},
      YEAR = {1998},
    NUMBER = {3},
     PAGES = {427--439},
      ISSN = {0178-8051,1432-2064},
   MRCLASS = {60G10 (28A99)},
  MRNUMBER = {1616492},
       DOI = {10.1007/s004400050154},
       URL = {https://doi.org/10.1007/s004400050154},
}

@article{marton1996bounding,
    AUTHOR = {Marton, K.},
     TITLE = {Bounding {$\overline d$}-distance by informational divergence:
              a method to prove measure concentration},
   JOURNAL = {Ann. Probab.},
  FJOURNAL = {The Annals of Probability},
    VOLUME = {24},
      YEAR = {1996},
    NUMBER = {2},
     PAGES = {857--866},
      ISSN = {0091-1798,2168-894X},
   MRCLASS = {60F10 (60G70)},
  MRNUMBER = {1404531},
       DOI = {10.1214/aop/1039639365},
       URL = {https://doi.org/10.1214/aop/1039639365},
}

@article{kucherenko2023asymptotic,
  title={Asymptotic behavior of the pressure function for Hölder potentials},
  author={Kucherenko, Tamara and Quas, Anthony},
  journal={arXiv preprint arXiv:2302.14839},
  year={2023}
}

@article {KQ22,
    AUTHOR = {Kucherenko, Tamara and Quas, Anthony},
     TITLE = {Flexibility of the pressure function},
   JOURNAL = {Comm. Math. Phys.},
  FJOURNAL = {Communications in Mathematical Physics},
    VOLUME = {395},
      YEAR = {2022},
    NUMBER = {3},
     PAGES = {1431--1461},
      ISSN = {0010-3616,1432-0916},
   MRCLASS = {37D35},
  MRNUMBER = {4496390},
MRREVIEWER = {Victor\ Vargas},
       DOI = {10.1007/s00220-022-04466-y},
       URL = {https://doi-org.ezproxy.lib.uh.edu/10.1007/s00220-022-04466-y},
}

@article {MP24,
    AUTHOR = {Ma, Liangang and Pollicott, Mark},
     TITLE = {Rigidity of pressures of {H}\"older potentials and the fitting
              of analytic functions through them},
   JOURNAL = {Ergodic Theory Dynam. Systems},
  FJOURNAL = {Ergodic Theory and Dynamical Systems},
    VOLUME = {44},
      YEAR = {2024},
    NUMBER = {12},
     PAGES = {3530--3564},
      ISSN = {0143-3857,1469-4417},
   MRCLASS = {37D35 (37A50)},
  MRNUMBER = {4818935},
MRREVIEWER = {Seyed\ Mohsen\ Moosavi},
       DOI = {10.1017/etds.2024.9},
       URL = {https://doi-org.ezproxy.lib.uh.edu/10.1017/etds.2024.9},
}

@incollection {dO70,
    AUTHOR = {Ornstein, D. S.},
     TITLE = {Imbedding {B}ernoulli shifts in flows},
 BOOKTITLE = {Contributions to {E}rgodic {T}heory and {P}robability ({P}roc.
              {C}onf., {O}hio {S}tate {U}niv., {C}olumbus, {O}hio, 1970)},
    SERIES = {Lecture Notes in Math.},
    VOLUME = {Vol. 160},
     PAGES = {178--218},
 PUBLISHER = {Springer, Berlin-New York},
      YEAR = {1970},
   MRCLASS = {28.70},
  MRNUMBER = {272985},
MRREVIEWER = {R.\ L.\ Adler},
}

@article {OW74,
    AUTHOR = {Ornstein, Donald S. and Weiss, Benjamin},
     TITLE = {Finitely determined implies very weak {B}ernoulli},
   JOURNAL = {Israel J. Math.},
  FJOURNAL = {Israel Journal of Mathematics},
    VOLUME = {17},
      YEAR = {1974},
     PAGES = {94--104},
      ISSN = {0021-2172},
   MRCLASS = {28A65},
  MRNUMBER = {346132},
MRREVIEWER = {Nathaniel\ Friedman},
       DOI = {10.1007/BF02756830},
       URL = {https://doi-org.ezproxy.lib.uh.edu/10.1007/BF02756830},
}

@article {dO70a,
    AUTHOR = {Ornstein, Donald},
     TITLE = {Bernoulli shifts with the same entropy are isomorphic},
   JOURNAL = {Advances in Math.},
  FJOURNAL = {Advances in Mathematics},
    VOLUME = {4},
      YEAR = {1970},
     PAGES = {337--352},
      ISSN = {0001-8708},
   MRCLASS = {28.70},
  MRNUMBER = {257322},
MRREVIEWER = {U.\ Krengel},
       DOI = {10.1016/0001-8708(70)90029-0},
       URL = {https://doi.org/10.1016/0001-8708(70)90029-0},
}

@book {dO74,
    AUTHOR = {Ornstein, Donald S.},
     TITLE = {Ergodic theory, randomness, and dynamical systems},
    SERIES = {Yale Mathematical Monographs},
    VOLUME = {No. 5},
      NOTE = {James K. Whittemore Lectures in Mathematics given at Yale
              University},
 PUBLISHER = {Yale University Press, New Haven, Conn.-London},
      YEAR = {1974},
     PAGES = {vii+141},
   MRCLASS = {28A65},
  MRNUMBER = {447525},
MRREVIEWER = {Paul\ C.\ Shields},
}

@article {AW67,
    AUTHOR = {Adler, R. L. and Weiss, B.},
     TITLE = {Entropy, a complete metric invariant for automorphisms of the
              torus},
   JOURNAL = {Proc. Nat. Acad. Sci. U.S.A.},
  FJOURNAL = {Proceedings of the National Academy of Sciences of the United
              States of America},
    VOLUME = {57},
      YEAR = {1967},
     PAGES = {1573--1576},
      ISSN = {0027-8424},
   MRCLASS = {28.70},
  MRNUMBER = {212156},
MRREVIEWER = {D.\ Newton},
       DOI = {10.1073/pnas.57.6.1573},
       URL = {https://doi.org/10.1073/pnas.57.6.1573},
}

@book {AW70,
    AUTHOR = {Adler, Roy L. and Weiss, Benjamin},
     TITLE = {Similarity of automorphisms of the torus},
    SERIES = {Memoirs of the American Mathematical Society},
    VOLUME = {No. 98},
 PUBLISHER = {American Mathematical Society, Providence, RI},
      YEAR = {1970},
     PAGES = {ii+43},
   MRCLASS = {28.70},
  MRNUMBER = {257315},
MRREVIEWER = {D.\ Newton},
}

@article {bW73,
    AUTHOR = {Weiss, Benjamin},
     TITLE = {Subshifts of finite type and sofic systems},
   JOURNAL = {Monatsh. Math.},
  FJOURNAL = {Monatshefte f\"ur Mathematik},
    VOLUME = {77},
      YEAR = {1973},
     PAGES = {462--474},
      ISSN = {0026-9255,1436-5081},
   MRCLASS = {28A65 (54H20)},
  MRNUMBER = {340556},
MRREVIEWER = {R.\ L.\ Adler},
       DOI = {10.1007/BF01295322},
       URL = {https://doi.org/10.1007/BF01295322},
}

@incollection {CT21,
    AUTHOR = {Climenhaga, Vaughn and Thompson, Daniel J.},
     TITLE = {Beyond {B}owen's specification property},
 BOOKTITLE = {Thermodynamic formalism},
    SERIES = {Lecture Notes in Math.},
    VOLUME = {2290},
     PAGES = {3--82},
 PUBLISHER = {Springer, Cham},
      YEAR = {[2021] \copyright 2021},
      ISBN = {978-3-030-74862-3; 978-3-030-74863-0},
   MRCLASS = {37D35},
  MRNUMBER = {4436821},
       DOI = {10.1007/978-3-030-74863-0\_1},
       URL = {https://doi.org/10.1007/978-3-030-74863-0_1},
}

@article {dR73,
    AUTHOR = {Ruelle, David},
     TITLE = {Statistical mechanics on a compact set with {$Z\sp{v}$} action
              satisfying expansiveness and specification},
   JOURNAL = {Trans. Amer. Math. Soc.},
  FJOURNAL = {Transactions of the American Mathematical Society},
    VOLUME = {187},
      YEAR = {1973},
     PAGES = {237--251},
      ISSN = {0002-9947,1088-6850},
   MRCLASS = {28A65 (54H20 58F99 82.28)},
  MRNUMBER = {417391},
MRREVIEWER = {R.\ L.\ Adler},
       DOI = {10.2307/1996437},
       URL = {https://doi.org/10.2307/1996437},
}

\end{document}